\renewcommand{\epsilon}{\varepsilon}
\numberwithin{equation}{section} 
\numberwithin{figure}{section}
\numberwithin{table}{section} 
\theoremstyle{definition}\newtheorem{defn}{Definition}
\numberwithin{defn}{section}
\theoremstyle{definition}
\numberwithin{eg}{section}
\theoremstyle{theorem}\newtheorem{prop}{Proposition}
\numberwithin{prop}{section}
\theoremstyle{definition}
\theoremstyle{definition}
\theoremstyle{definition}
\theoremstyle{definition}
\theoremstyle{definition}\newtheorem*{openqs}{Open Questions}
\theoremstyle{theorem}\newtheorem{thm}{Theorem}
\numberwithin{thm}{section}
\theoremstyle{theorem}
\theoremstyle{definition}
\theoremstyle{definition}\newtheorem*{notation}{Notation}
\theoremstyle{definition}
\theoremstyle{definition}
\theoremstyle{definition}
\theoremstyle{theorem}
\numberwithin{cor}{section}
\newcommand{\C}{\mathbb C}
\newcommand{\R}{\mathbb R}
\renewcommand{\phi}{\varphi}
\newcommand{\Tr}{\operatorname{Tr}}
\newcommand{\inv}{^{-1}}
\title[Explicit Bounds for the Pseudospectra of Matrices and Operators]{Explicit Bounds for the Pseudospectra of Various Classes of Matrices and Operators}
\author[Gong, Meyerson, Meza, Stoiciu, Ward]{Feixue Gong$^1$, Olivia Meyerson$^2$, Jeremy Meza$^3$, Mihai Stoiciu$^4$, Abigail Ward$^5$ \\ }
\thanks{  $^{1,2, 4}$Williams College $^3$Carnegie Mellon University $^5$The University of Chicago}
\date{\today}
\begin{document}

\begin{abstract}

We study the $\epsilon$-pseudospectra $\sigma_\epsilon(A)$ of square matrices $A \in \mathbb{C}^{N \times N}$. We give a complete characterization of the $\epsilon$-pseudospectrum of any $2 \times 2$ matrix and describe the asymptotic behavior (as $\epsilon \to 0$) of $\sigma_\epsilon(A)$ for any square matrix $A$. We also present explicit upper and lower bounds for the $\epsilon$-pseudospectra of bidiagonal matrices, as well as for finite rank operators.

\end{abstract}

\maketitle

\section{Introduction}

The pseudospectra of matrices and operators is an important mathematical object that has found applications in various areas of mathematics: linear algebra, functional analysis, numerical analysis, and differential equations. An overview of the main results on pseudospectra can be found in \cite{trefethen2005spectra}.

In this paper we describe the asymptotic behavior of the $\epsilon$-pseudospectrum of any $n \times n$ matrix. We apply this asymptotic bound and additionally provide explicit bounds on their $\epsilon$-pseudospectra to several classes of matrices and operators, including $2 \times 2$ matrices, bidiagonal matrices, and finite rank operators.

The paper is organized as follows: in Section \ref{sec: pseudospectra} we give the three standard equivalent definitions for the pseudospectrum and we present the ``classical" results on $\epsilon$-pseudospectra of normal and diagonalizable matrices (the Bauer-Fike theorems). Section \ref{sec: 2x2} contains a detailed analysis of the $\epsilon$-pseudospectrum of $2 \times 2$ matrices, including both the non-diagonalizable case (Subsection \ref{subsec: non-diag}) and the diagonalizable case (Subsection \ref{subsec: diag}). The asymptotic behavior (as $\epsilon \to 0$) of the $\epsilon$-pseudospectrum of any $n \times n$ matrix is described in Section \ref{sec: audit}, where we show (in Theorem \ref{thm: AUDiT}) that, for any square matrix $A$, the $\epsilon$-pseudospectrum converges, as $\epsilon \to 0$ to a union of disks. We apply the main result of Section \ref{sec: audit} to several classes of matrices: matrices with a simple eigenvalue, matrices with an eigenvalue with geometric multiplicity 1, $2 \times 2$ matrices, and Jordan blocks. 

Section \ref{sec: bidiag} is dedicated to the analysis of arbitrary periodic bidiagonal matrices $A$. We derive explicit formulas (in terms the coefficients of $A$) for the asymptotic radii, given by Theorem \ref{thm: AUDiT}, of the $\epsilon$-pseudospectrum of $A$, as $\epsilon \to 0$. In the last section (Section \ref{sec: finiterank}) we consider finite rank operators and show that the $\epsilon$-pseudospectrum of an operator of rank $m$ is at most as big as $C \epsilon^{\frac{1}{m}}$, as $\epsilon \to 0$.

\section{Pseudospectra} \label{sec: pseudospectra}

\subsection{Motivation and Definitions}

The concept of the spectrum of a matrix $A \in \C^{N \times N}$ provides a fundamental tool for understanding the behavior of $A$. As is well-known, a complex number $z \in \C$ is in the spectrum of $A$ (denoted $\sigma(A)$) whenever $zI-A$ (which we will denote as $z-A$) is not invertible, i.e., the characteristic polynomial of $A$ has $z$ as a root. As slightly perturbing the coefficients of $A$ will change the roots of the characteristic polynomial, the property of ``membership in the set of eigenvalues" is not well-suited for many  purposes, especially those in numerical analysis. We thus want to find a characterization of when a complex number is close to an eigenvalue, and we do this by considering the set of complex numbers $z$ such that $\|(z-A)\inv\|$ is large, where the norm here is the usual operator norm induced by the Euclidean norm, i.e.
\[\|A\| = \displaystyle \sup_{\|v\|=1} \|Av\|. \]
The motivation for considering this question comes from the observation that if $z_n$ is a sequence of complex numbers converging to an eigenvalue $\lambda$ of $A$, then $\| (z_n-A)\inv\| \to \infty$ as $n \to \infty$. We call the operator $(z-A)\inv$ the $\emph{resolvent}$ of $A$. The observation that the norm of the resolvent is large when $z$ is close to an eigenvalue of $A$ leads us to the first definition of the $\epsilon$-pseudospectrum of an operator.
\begin{defn}
Let $A \in \mathbb{C}^{N \times N}$, and let $\epsilon > 0$. The $\epsilon$-pseudospectrum of $A$ is the set of $z \in \mathbb{C}$ such that
\[ \| (z-A)^{-1} \| > 1/\epsilon \]
\end{defn}

Note that the boundary of the $\epsilon$-pseudospectrum is exactly the $1/\epsilon$ level curve of the function $z \mapsto \|(z-A)^{-1} \|$. Fig. \ref{resnorm} depicts the behavior of this function near the eigenvalues.

\begin{figure}[h]
\includegraphics[scale=0.5]{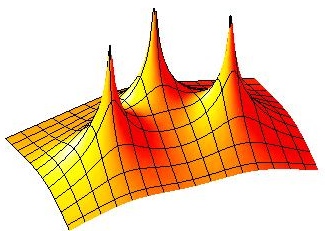}
\caption{Contour Plot of Resolvent Norm}
\label{resnorm}
\end{figure}

The resolvent norm has singularities in the complex plane, and as we approach these points, the resolvent norm grows to infinity. Conversely, if $\| (z-A)^{-1} \| $ approaches infinity, then $z$ must approach some eigenvalue of $A$ \cite[Thm 2.4]{trefethen2005spectra}.

(It is also possible to develop a theory of pseudospectrum for operators on Banach spaces, and it is important to note that this converse does not necessarily hold for such operators; that is, there are operators \cite{davies1999pseudo, davies1999semi} such that $\| (z-A)^{-1} \| $ approaches infinity, but $z$ does not approach the spectrum of $A$.)

The second and third definitions of the $\epsilon$-pseudospectrum arise from eigenvalue perturbation theory \cite{kato1995perturbation}.
\begin{defn}
Let $A \in \mathbb{C}^{N \times N}$. The $\epsilon$-pseudospectrum of $A$ is the set of $z \in \mathbb{C}$ such that
\[ z \in \sigma(A + E) \]
for some $E$ with $\| E \| < \epsilon$.
\end{defn}

\begin{defn}
Let $A \in \mathbb{C}^{N \times N}$. The $\epsilon$-pseudospectrum of $A$ is the set of $z \in \mathbb{C}$ such that
\[ \| (z-A)v \| < \epsilon \]
for some unit vector $v$.
\end{defn}
The third definition is similar to our first definition in that it quantifies how close $z$ is to an eigenvalue of $A$. In addition to this, it also gives us the notion of an $\epsilon$-\emph{pseudoeigenvector}.

\begin{thm}[Equivalence of the definitions of pseudospectra] For any matrix $A \in \mathbb{C}^{N \times N}$, the three definitions above are equivalent.  \end{thm}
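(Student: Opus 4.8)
The plan is to prove the chain of implications $(\text{Def 3}) \Rightarrow (\text{Def 2}) \Rightarrow (\text{Def 1}) \Rightarrow (\text{Def 3})$, where I label the definitions in the order they appear: Def 1 is the resolvent-norm definition $\|(z-A)^{-1}\| > 1/\epsilon$, Def 2 is the perturbation definition $z \in \sigma(A+E)$ for some $\|E\| < \epsilon$, and Def 3 is the pseudoeigenvector definition $\|(z-A)v\| < \epsilon$ for some unit vector $v$. Throughout, the norm is the Euclidean operator norm, and one convention must be fixed at the outset: if $z \in \sigma(A)$, then $(z-A)^{-1}$ is interpreted as having norm $+\infty$, so that $z \in \sigma(A)$ always lies in the $\epsilon$-pseudospectrum under each definition. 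With that convention, the case $z \in \sigma(A)$ is trivial for all three (take $E = 0$ in Def 2, and a genuine eigenvector $v$ in Def 3), so in each implication I may assume $z \notin \sigma(A)$ and hence that $z - A$ is invertible.

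First I would prove $(\text{Def 3}) \Rightarrow (\text{Def 2})$. Suppose $\|(z-A)v\| < \epsilon$ for a unit vector $v$; set $w = (z-A)v$, so $\|w\| < \epsilon$. The idea is to build a rank-one perturbation $E$ sending $v \mapsto w$ while killing the orthogonal complement, namely $E = -w v^*$ (using $\|v\|=1$). Then $\|E\| = \|w\| < \epsilon$, and $(z - (A+E))v = (z-A)v - (-(-w v^*))v$; computing, $(A+E)v = Av - w(v^*v) = Av - w = Av - (z-A)v = (2A - z)v$... let me instead just note $(z - A - E)v = (z-A)v - w = 0$ directly once one checks $Ev = w v^* v \cdot(-1)$ has the right sign — the point is simply that $E$ is chosen so that $(A+E)v = Av - (z-A)v$ equals $zv$ is false; rather one wants $(A+E)v = A v + E v$ with $Ev = -(z-A)v$, i.e. $E = -(z-A)vv^*$, which has norm $\|(z-A)v\| < \epsilon$ and satisfies $(A+E)v = zv$, so $z \in \sigma(A+E)$. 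Next, $(\text{Def 2}) \Rightarrow (\text{Def 1})$: if $z \in \sigma(A+E)$ with $\|E\| < \epsilon$ and $z \notin \sigma(A)$, write $z - (A+E) = (z-A)\bigl(I - (z-A)^{-1}E\bigr)$. Since the left side is singular but $z-A$ is invertible, $I - (z-A)^{-1}E$ must be singular, so $\|(z-A)^{-1}E\| \ge 1$; hence $\|(z-A)^{-1}\|\,\|E\| \ge 1$, giving $\|(z-A)^{-1}\| \ge 1/\|E\| > 1/\epsilon$.

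Finally, $(\text{Def 1}) \Rightarrow (\text{Def 3})$: assume $\|(z-A)^{-1}\| > 1/\epsilon$ and $z \notin \sigma(A)$ (if $z \in \sigma(A)$ use an eigenvector). By definition of the operator norm as a supremum, there is a unit vector $u$ with $\|(z-A)^{-1}u\| > 1/\epsilon$. Set $v = (z-A)^{-1}u / \|(z-A)^{-1}u\|$, a unit vector; then $(z-A)v = u/\|(z-A)^{-1}u\|$, so $\|(z-A)v\| = 1/\|(z-A)^{-1}u\| < \epsilon$, as desired. The only subtlety, and the step I would flag as needing the most care, is the sup-is-attained-or-approached issue in this last implication: in finite dimensions the supremum defining $\|(z-A)^{-1}\|$ is attained, so one can even take strict inequality cleanly, but the argument as written only needs a vector $u$ on which the operator exceeds $1/\epsilon$, which exists whenever the norm strictly exceeds $1/\epsilon$; one should remark on this to keep the strict inequalities consistent around the cycle. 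I would also make explicit at the start that the $+\infty$ convention for the resolvent at spectral points is what makes $\sigma(A)$ sit inside all three sets, so that the three definitions agree as subsets of $\mathbb{C}$ and not merely on the complement of $\sigma(A)$.
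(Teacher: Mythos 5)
The paper does not supply its own proof of this theorem; it simply cites \cite[\S 2]{trefethen2005spectra}. Your cyclic chain Def~3 $\Rightarrow$ Def~2 $\Rightarrow$ Def~1 $\Rightarrow$ Def~3, together with the convention $\|(z-A)^{-1}\| = \infty$ at spectral points so that $\sigma(A)$ is contained in all three sets, is precisely the standard argument found there. The steps Def~2 $\Rightarrow$ Def~1 (factor $z - A - E = (z-A)\bigl(I - (z-A)^{-1}E\bigr)$ and note that $I - B$ is invertible whenever $\|B\| < 1$) and Def~1 $\Rightarrow$ Def~3 (normalize a vector witnessing $\|(z-A)^{-1}u\| > 1/\epsilon$) are correct as written, and your remark about the supremum being attained in finite dimensions is sensible though not strictly needed, since the strict inequality $\|(z-A)^{-1}\| > 1/\epsilon$ already guarantees some unit $u$ with $\|(z-A)^{-1}u\| > 1/\epsilon$.

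There is, however, a genuine sign error in Def~3 $\Rightarrow$ Def~2, which you circle around but never resolve. Set $w = (z-A)v$ with $\|w\| < \epsilon$. The condition $z \in \sigma(A+E)$ means $(z - A - E)v = 0$, i.e.\ $Ev = (z-A)v = w$. The rank-one matrix accomplishing this is $E = w v^* = (z-A)v\,v^*$, with a \emph{plus} sign, and then $\|E\| = \|w\|\,\|v\| = \|(z-A)v\| < \epsilon$ while $(z - A - E)v = w - w = 0$. Your final formula $E = -(z-A)v v^*$ instead gives $Ev = -(z-A)v$, hence $(A+E)v = Av - (z-A)v = 2Av - zv$, which is not $zv$ unless $v$ was already an eigenvector. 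The norm bound is unaffected by the sign, so the fix is just to drop the minus and replace the meandering parenthetical in that paragraph with the one-line computation above; the overall structure of the proof is sound.
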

The proof of this theorem is given in  \cite[\S 2]{trefethen2005spectra}. As all three definitions are equivalent, we can unambiguously denote the $\epsilon$-pseudospectrum of $A$ as $\sigma_\epsilon(A)$.

Fig.~\ref{nonnormal} depicts an example of $\epsilon$-pseudospectra for a specific matrix and for various $\epsilon$. We see that the boundaries of $\epsilon$-pseudospectra for a matrix are curves in the complex plane around the eigenvalues of the matrix. We are interested in understanding geometric and algebraic properties of these curves.

\begin{figure}[h]
\includegraphics[scale=0.5]{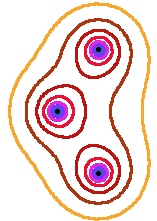}
\caption{The curves bounding the $\epsilon$-pseudospectra of a matrix $A$, for different values of $\epsilon$.}
\label{nonnormal}
\end{figure}

Several properties of pseudospectra are proven in \cite[\S 2]{trefethen2005spectra}. One of which is that if $A \in \C^{N \times N}$, then $\sigma_\epsilon(A)$ is nonempty, open, and bounded, with at most $N$ connected components, each containing one or more eigenvalues of $A$. This leads us to the following notation:
\begin{notation}
For $\lambda \in \sigma(A)$, we write $\sigma_\epsilon(A) \restriction \lambda$ to be the connected component of $\sigma_\epsilon(A)$ that contains $\lambda$.
\end{notation}
Another property, which follows straight from the definitions of pseudospectra, is that $\bigcap_{\epsilon > 0} \sigma_\epsilon(A) = \sigma(A)$. From these properties, it follows that there is $\epsilon$ small enough so that $\sigma_\epsilon(A)$ consists of exactly $|\sigma(A)|$ connected components, each an open set around a distinct eigenvalue. In particular, there is $\epsilon$ small enough so that $\sigma(A) \cap \sigma_\epsilon(A) \restriction \lambda = \{\lambda\}$.

When a matrix $A$ is the direct sum of smaller matrices, we can look at the pseudospectra of the smaller matrices to understand the $\epsilon$-pseudospectrum of $A$. We get the following theorem from \cite{trefethen2005spectra}:
\begin{thm}
 \[\sigma_\epsilon(A_1 \oplus A_2) = \sigma_\epsilon(A_1) \cup \sigma_\epsilon(A_2).\]
\label{directsum}
\end{thm}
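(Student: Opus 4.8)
The plan is to work from the first (resolvent-norm) definition of the pseudospectrum. Write $A = A_1 \oplus A_2$ and note that $z - A = (z - A_1) \oplus (z - A_2)$ is block diagonal; a block-diagonal matrix is invertible precisely when each diagonal block is, and in that case its inverse is the direct sum of the inverses of the blocks. Consequently $\sigma(A) = \sigma(A_1) \cup \sigma(A_2)$, and for every $z \notin \sigma(A)$ we have $(z-A)^{-1} = (z-A_1)^{-1} \oplus (z-A_2)^{-1}$.

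The key computational step is the identity $\|B_1 \oplus B_2\| = \max\{\|B_1\|, \|B_2\|\}$ for the operator norm of a block-diagonal matrix. To see this, decompose a unit vector as $v = v_1 \oplus v_2$; by orthogonality of the two blocks, $\|(B_1 \oplus B_2) v\|^2 = \|B_1 v_1\|^2 + \|B_2 v_2\|^2 \le \max\{\|B_1\|, \|B_2\|\}^2 (\|v_1\|^2 + \|v_2\|^2) = \max\{\|B_1\|, \|B_2\|\}^2$, which gives ``$\le$''; conversely, testing on unit vectors of the form $v_1 \oplus 0$ and $0 \oplus v_2$ shows the norm is at least $\|B_1\|$ and at least $\|B_2\|$, giving ``$\ge$''.

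Combining these two observations: for $z \notin \sigma(A)$,
\[ z \in \sigma_\epsilon(A) \iff \|(z-A)^{-1}\| > \tfrac1\epsilon \iff \max\{\|(z-A_1)^{-1}\|, \|(z-A_2)^{-1}\|\} > \tfrac1\epsilon, \]
and the last condition holds iff $\|(z-A_i)^{-1}\| > 1/\epsilon$ for $i=1$ or $i=2$, i.e.\ iff $z \in \sigma_\epsilon(A_1) \cup \sigma_\epsilon(A_2)$. The only remaining case is $z \in \sigma(A) = \sigma(A_1) \cup \sigma(A_2)$, where the resolvents may fail to exist; but then $z$ lies in $\sigma(A) \subseteq \sigma_\epsilon(A)$ and in $\sigma(A_i) \subseteq \sigma_\epsilon(A_i)$ for the relevant $i$, so $z$ belongs to both sides of the claimed equality. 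This exhausts all $z \in \mathbb{C}$ and proves the theorem.

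There is no genuinely hard step here; the only points that need a little care are the block-diagonal operator-norm identity above (where the Euclidean structure is essential — it would fail for a general norm) and the bookkeeping for $z \in \sigma(A)$, where one must not literally manipulate $(z-A)^{-1}$. If one prefers to avoid case distinctions entirely, an alternative is to run the same argument from the third (pseudoeigenvector) definition: given a unit $v = v_1 \oplus v_2$ with $\|(z-A)v\| < \epsilon$, a short averaging argument shows $\|(z-A_i)v_i\| < \epsilon \|v_i\|$ for at least one $i$ with $v_i \neq 0$, so that $v_i / \|v_i\|$ witnesses $z \in \sigma_\epsilon(A_i)$; the reverse inclusion is then immediate by padding a pseudoeigenvector with zeros.
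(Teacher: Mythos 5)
Your proof is correct. The paper does not actually prove this theorem---it is stated as imported from Trefethen and Embree \cite{trefethen2005spectra}---but your argument via the resolvent-norm definition together with the block-diagonal identity $\|B_1 \oplus B_2\| = \max\{\|B_1\|, \|B_2\|\}$ is exactly the standard one given in that source, and your handling of the degenerate case $z \in \sigma(A)$ is careful and complete.
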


\subsection{Normal Matrices}

 Recall that a matrix $A$ is \emph{normal} if $A A^\ast = A^\ast A$, or equivalently if $A$ can be diagonalized with an orthonormal basis of eigenvectors.

The pseudospectra of these matrices are particularly well-behaved: Thm.~\ref{normalpseudo} shows that the $\epsilon$-pseudospectrum of a normal matrix is exactly a disk of radius $\epsilon$ around each eigenvalue, as in shown in Fig.~\ref{normalex}. This is clear for diagonal matrices; it follows for normal matrices since as we shall see, the $\epsilon$-pseudospectrum  of a matrix is invariant under a unitary change of basis.

\begin{figure}[h]
\includegraphics[scale=0.5]{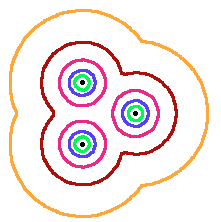}
\caption{The $\epsilon$-pseudospectrum of a normal matrix. Note that each boundary is a perfect disk around an eigenvalue.}
\label{normalex}
\end{figure}

\begin{thm} Let $A \in \mathbb{C}^{N \times N}$. Then,
\begin{equation} \sigma(A) + B(0, \epsilon) \subseteq \sigma_\epsilon(A)  \qquad \text{ for all } \; \epsilon > 0 \label{eq: normal1}. \end{equation}
Furthermore, $A$ is a normal matrix if and only if
\begin{equation} \sigma_\epsilon(A) = \sigma(A) + B(0,\epsilon)  \qquad \text{ for all } \; \epsilon > 0 \label{eq: normal2}. \end{equation}

\label{normalpseudo} \end{thm}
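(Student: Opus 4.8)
The plan is to prove the three assertions separately: the containment \eqref{eq: normal1}, the implication (normal $\Rightarrow$ \eqref{eq: normal2}), and the converse (\eqref{eq: normal2} $\Rightarrow$ normal), with the last being where the actual content lies. For \eqref{eq: normal1}: given $\lambda\in\sigma(A)$ and $z$ with $|z-\lambda|<\epsilon$, choose a unit eigenvector $v$, $Av=\lambda v$; then $\|(z-A)v\|=|z-\lambda|<\epsilon$, so $z\in\sigma_\epsilon(A)$ by the third definition (equivalently, $z\in\sigma\big(A+(z-\lambda)I\big)$ and $\|(z-\lambda)I\|<\epsilon$, using the second definition).

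For normal $\Rightarrow$ \eqref{eq: normal2}: first record that $\sigma_\epsilon$ is unitarily invariant — if $U^*U=I$ then $(z-U^*AU)^{-1}=U^*(z-A)^{-1}U$, and conjugation by a unitary preserves the operator norm, so $\|(z-U^*AU)^{-1}\|=\|(z-A)^{-1}\|$. Since a normal matrix is unitarily diagonalizable, it suffices to verify \eqref{eq: normal2} for a diagonal matrix $D=\operatorname{diag}(\lambda_1,\dots,\lambda_N)$, where $(z-D)^{-1}$ is diagonal with entries $(z-\lambda_j)^{-1}$; hence $\|(z-D)^{-1}\|=\max_j|z-\lambda_j|^{-1}=\dist(z,\sigma(D))^{-1}$, which exceeds $1/\epsilon$ exactly when $\dist(z,\sigma(D))<\epsilon$. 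Transporting back by the unitary and using $\sigma(D)=\sigma(A)$ gives \eqref{eq: normal2}.

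For \eqref{eq: normal2} $\Rightarrow$ normal: the hypothesis localizes. Fix $\lambda\in\sigma(A)$ and set $\delta=\dist\big(\lambda,\sigma(A)\setminus\{\lambda\}\big)$, so that $\dist(z,\sigma(A))=|z-\lambda|$ whenever $0<|z-\lambda|<\delta$; for such $z$, \eqref{eq: normal2} applied with every $\epsilon>0$ forces $\|(z-A)^{-1}\|=|z-\lambda|^{-1}$. Now compare with the Laurent expansion of the resolvent at $\lambda$: writing $P_\lambda$ for the Riesz spectral projection of $A$ at $\lambda$ and $p\ge 1$ for the order of the pole, one has $(z-\lambda)^p(z-A)^{-1}\to D$ as $z\to\lambda$ for some nonzero $D$ (namely $D=P_\lambda$ if $p=1$, and $D$ equal to the top nilpotent part otherwise). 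If $p\ge 2$ this makes $\|(z-A)^{-1}\|$ grow like $|z-\lambda|^{-p}$, contradicting $\|(z-A)^{-1}\|=|z-\lambda|^{-1}$; hence $p=1$ for every eigenvalue, so $A$ is diagonalizable, $D=P_\lambda$, and $|z-\lambda|\,\|(z-A)^{-1}\|\to\|P_\lambda\|$, while the left side is identically $1$. Thus $\|P_\lambda\|=1$ for every $\lambda$. A norm-one idempotent on a Hilbert space is an orthogonal projection, so each $P_\lambda=P_\lambda^*$; since $A=\sum_\lambda\lambda P_\lambda$ with $P_\lambda P_\mu=\delta_{\lambda\mu}P_\lambda$, we get $A^*=\sum_\lambda\bar\lambda P_\lambda$ and hence $AA^*=\sum_\lambda|\lambda|^2P_\lambda=A^*A$.

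The main obstacle is this last implication: one must extract from the single scalar identity $\|(z-A)^{-1}\|=1/\dist(z,\sigma(A))$ both the absence of Jordan blocks and the orthogonality of the eigenspaces, and the resolvent expansion near each eigenvalue is what makes both fall out. The one supporting fact worth isolating is that an idempotent $P$ with $\|P\|=1$ satisfies $P=P^*$: for $y\in\im P$, $x\in\ker P$ and a scalar $t$, $\|y\|^2=\|P(y+tx)\|^2\le\|y+tx\|^2=\|y\|^2+2\Re(\bar t\langle x,y\rangle)+|t|^2\|x\|^2$, and taking $t$ to be a small negative real multiple of $\langle x,y\rangle/|\langle x,y\rangle|$ forces $\langle x,y\rangle=0$, i.e. $\ker P\perp\im P$, so $P$ is the orthogonal projection onto $\im P$.
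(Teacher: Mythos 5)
The paper does not prove Theorem~\ref{normalpseudo} itself; it defers to Trefethen--Embree \S 2. Your argument is a correct and self-contained proof. The inclusion \eqref{eq: normal1} via the pseudoeigenvector definition and the implication normal~$\Rightarrow$~\eqref{eq: normal2} via unitary invariance and reduction to the diagonal case are both the standard arguments. The content is the converse, and your route is clean: from the hypothesis applied to every $\epsilon>0$ you first extract, on $\C\setminus\sigma(A)$, the pointwise identity $\|(z-A)^{-1}\|=1/\dist(z,\sigma(A))$; comparing this with the Laurent expansion of the resolvent near each eigenvalue kills poles of order $\geq 2$ (hence $A$ is diagonalizable) and forces each Riesz projection $P_\lambda$ to have norm exactly one; your supporting lemma that a norm-one idempotent on a Hilbert space is self-adjoint then finishes, since $A=\sum_\lambda\lambda P_\lambda$ with self-adjoint commuting $P_\lambda$ is visibly normal. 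This is essentially the Riesz-functional-calculus proof; Trefethen--Embree phrase the same endgame in terms of eigenvalue condition numbers $\kappa(\lambda)$, which for a simple eigenvalue equals $\|P_\lambda\|$, so the approaches are the same in substance.

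One small slip worth fixing: with $\delta=\dist\big(\lambda,\sigma(A)\setminus\{\lambda\}\big)$, the identity $\dist(z,\sigma(A))=|z-\lambda|$ is \emph{not} guaranteed on all of $0<|z-\lambda|<\delta$. For instance with $\lambda=0$, $\mu=1$, $\delta=1$, and $z=0.8$, one has $|z-\lambda|=0.8<\delta$ but $\dist(z,\sigma(A))=0.2$. Replacing $\delta$ by $\delta/2$ (so that $|z-\mu|\geq\delta-|z-\lambda|>|z-\lambda|$ for every other eigenvalue $\mu$) repairs this, and costs nothing since you only use the identity in the limit $z\to\lambda$.
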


The proof of this theorem can be found in  \cite[\S 2]{trefethen2005spectra}

\subsection{Non-normal Diagonalizable Matrices}

Now suppose $A$ is diagonalizable but not normal, i.e. we cannot diagonalize $A$ by an isometry of $\C^N$. In this case we do not expect to get an exact characterization of the $\epsilon$-pseudospectra as we did previously. That is, there exist matrices with pseudospectra larger than the disk of radius $\epsilon$. Regardless, we can still characterize the behavior of non-normal, diagonalizable matrices.

\begin{thm}[Bauer-Fike] Let $A \in \mathbb{C}^{N \times N}$ and let A be diagonalizable, $A = V D V\inv$. Then for each $\epsilon > 0$,
\[\sigma(A) + B(0, \epsilon) \subseteq \sigma_\epsilon(A) \subseteq \sigma(A) + B(0, \epsilon \kappa(V)) \]
where
\[\kappa(V) = \| V \| \|V\inv \| = \frac{s_{\max}(V)}{s_{\min}(V)}, \]
and $s_{\max}(V)$, $s_{\min}(V)$ are the maximum and minimum singular values of $V$, respectively.
\label{bfv1} \end{thm}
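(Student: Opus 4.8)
The plan is to work from the resolvent-norm definition of the pseudospectrum, since the inclusions we want are really just two-sided estimates on $\|(z-A)^{-1}\|$. The left inclusion $\sigma(A)+B(0,\epsilon)\subseteq\sigma_\epsilon(A)$ is exactly \eqref{eq: normal1} from Theorem \ref{normalpseudo}, so I would simply invoke it and concentrate on the right inclusion. For that, suppose $z\notin\sigma(A)$ (if $z\in\sigma(A)$ it lies in $\sigma(A)+B(0,\epsilon\kappa(V))$ trivially). Writing $A=VDV^{-1}$ with $D=\operatorname{diag}(\lambda_1,\dots,\lambda_N)$, we have
\[
(z-A)^{-1} = V (z-D)^{-1} V^{-1},
\]
and hence, by submultiplicativity of the operator norm,
\[
\|(z-A)^{-1}\| \le \|V\|\,\|(z-D)^{-1}\|\,\|V^{-1}\| = \kappa(V)\,\|(z-D)^{-1}\|.
\]

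Next I would compute $\|(z-D)^{-1}\|$ exactly: since $z-D$ is diagonal, its inverse is diagonal with entries $(z-\lambda_j)^{-1}$, and the operator norm of a diagonal matrix is the largest modulus of its diagonal entries, so
\[
\|(z-D)^{-1}\| = \max_{j} \frac{1}{|z-\lambda_j|} = \frac{1}{\operatorname{dist}(z,\sigma(A))}.
\]
Combining the two displays gives $\|(z-A)^{-1}\| \le \kappa(V)/\operatorname{dist}(z,\sigma(A))$. Now if $z\in\sigma_\epsilon(A)$, then by definition $\|(z-A)^{-1}\|>1/\epsilon$, so $1/\epsilon < \kappa(V)/\operatorname{dist}(z,\sigma(A))$, which rearranges to $\operatorname{dist}(z,\sigma(A)) < \epsilon\,\kappa(V)$; that is, $z\in\sigma(A)+B(0,\epsilon\kappa(V))$. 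This establishes the right inclusion.

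Finally I would verify the stated identity $\kappa(V)=s_{\max}(V)/s_{\min}(V)$: this is standard, following from $\|V\|=s_{\max}(V)$ and $\|V^{-1}\|=1/s_{\min}(V)$ via the singular value decomposition of $V$ (the nonzero singular values of $V^{-1}$ are the reciprocals of those of $V$). None of the steps is a genuine obstacle — the argument is a short chain of norm inequalities — but the one point requiring a little care is the reduction handling $z\in\sigma(A)$ and the observation that $V$ is invertible (so $s_{\min}(V)>0$ and $\kappa(V)$ is finite), which is where diagonalizability, as opposed to mere similarity to a diagonal matrix, is used.
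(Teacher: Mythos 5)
The paper states this Bauer--Fike theorem without giving its own proof; it is a classical result presented as background (it is proved, for instance, in \cite[\S 2]{trefethen2005spectra}). Your argument is correct and is exactly the standard textbook proof: invoke $\sigma(A)+B(0,\epsilon)\subseteq\sigma_\epsilon(A)$ from Theorem \ref{normalpseudo} for the left inclusion, conjugate the resolvent by $V$ and use submultiplicativity together with the exact formula $\|(z-D)^{-1}\| = 1/\dist(z,\sigma(A))$ for a diagonal matrix to get the right inclusion, and identify $\kappa(V)=s_{\max}(V)/s_{\min}(V)$ via the SVD. The care you take with the degenerate case $z\in\sigma(A)$ and with invertibility of $V$ is appropriate but routine. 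There is nothing to compare against in the paper, and no gap in what you wrote.
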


Here, $\kappa(V)$ is known as the condition number of $V$. Note that $\kappa(V)\geq 1$, with equality attained if and only if $A$ is normal. Thus, $\kappa(V)$ can be thought of as a measure of the normality of a matrix. However, there is some ambiguity when we define $\kappa(V)$, as $V$ is not uniquely determined. If the eigenvalues are distinct, then $\kappa(V)$ becomes unique if the eigenvectors are normalized by $\|v_j\| = 1$.

%

\subsection{Non-diagonalizable Matrices}

So far we have considered normal matrices, and more generally diagonalizable matrices. We now relax our constraint that our matrix be diagonalizable, and provide similar bounds on the pseudospectra.
While not every matrix is diagonalizable, every matrix can be put in Jordan normal form. Below we give a brief review of the Jordan form.

Let $A \in \mathbb{C}^{N \times N}$ and suppose $A$ has only one eigenvalue, $\lambda$ with geometric multiplicity one. Writing $A$ in Jordan form, there exists a matrix $V$ such that $AV=VJ$,
where $J$ is a single Jordan block of size $N$. Write
\[V= \begin{pmatrix}
v_1, & v_2, & \ldots \ ,& v_n \end{pmatrix}\]
Then,
\[AV= \begin{pmatrix} Av_1, & Av_2, & \ldots \ , & Av_n \end{pmatrix} = \begin{pmatrix}  \lambda v_1, & v_1+\lambda v_2, & \ldots \ ,& v_{n-1} + \lambda v_n \end{pmatrix} = VJ, \]
and hence $v_1$ is a right eigenvector associated with $\lambda$ and $v_2, ..., v_n$ are \emph{generalized right eigenvectors}, that is right eigenvectors for $(A-\lambda I)^k$ for $k > 1$.
Similarly, there exists a matrix $U$ such that $U^\ast A = JU^\ast$, where now the rows of $U^\ast$ are left generalized eigenvectors associated with $\lambda$.

We can also quantify the normality of an eigenvalue in the same way $\kappa(V)$ quantifies the normality of a matrix.
\begin{defn} For any simple eigenvalue $\lambda_j$ of a matrix $A$, the \emph{condition number of $\lambda_j$} is defined as
\[\kappa(\lambda_j) = \frac {\| u_j\| \|v_j\| }{|u_j ^* v_j|}, \]
where $v_j$ and $u^*_j$ are the right and left eigenvectors associated with $\lambda_j$, respectively.
\end{defn}
\textbf{Note:} The Cauchy-Schwarz inequality implies that $|u_j ^* v_j| \leq \| u_j\| \|v_j\|$, so $\kappa(\lambda_j) \geq 1$, with equality when $u_j$ and $v_j$ are collinear. An eigenvalue for which $\kappa(\lambda_j)=1$ is called a normal eigenvalue; a matrix $A$ with all simple eigenvalues is normal if and only if $\kappa(\lambda_j)=1$ for all eigenvalues.

With this definition, we can find finer bounds for the pseudospectrum of a matrix; in particular, we can find bounds for the components of the pseduospectrum centered around each eigenvalue. The following theorem can be found for example in \cite{baumgartel1985analytic}.
\begin{thm}[Asymptotic pseudospectra inclusion regions]
Suppose $A \in \mathbb{C}^{N \times N}$ has $N$ distinct eigenvalues. Then, as $\epsilon \rightarrow 0$,
\[ \sigma_\epsilon (A) \subseteq \bigcup_{j=1}^{N} B\left(\lambda_j, \epsilon\kappa(\lambda_j)+\mathcal{O}\left(\epsilon^2\right)\right). \]
\label{thm: distinctBF}
\end{thm}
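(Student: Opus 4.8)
The plan is to localize the problem near a single eigenvalue and then expand the resolvent norm to leading order in $\epsilon$. Since $A$ has $N$ distinct eigenvalues, it is diagonalizable, so by the remarks in Section \ref{sec: pseudospectra} there is $\epsilon_0>0$ such that for $\epsilon<\epsilon_0$ the pseudospectrum $\sigma_\epsilon(A)$ splits into $N$ connected components $\sigma_\epsilon(A)\restriction\lambda_j$, each an open neighborhood of $\lambda_j$. Hence it suffices to show that $\sigma_\epsilon(A)\restriction\lambda_j\subseteq B(\lambda_j,\epsilon\kappa(\lambda_j)+\mathcal{O}(\epsilon^2))$ for each fixed $j$.

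First I would use the spectral decomposition $A=\sum_{k=1}^N \lambda_k P_k$, where $P_k=v_k u_k^\ast/(u_k^\ast v_k)$ is the (non-orthogonal) spectral projection onto the $\lambda_k$-eigenspace, with $\|P_j\|=\kappa(\lambda_j)$ (the norm of a rank-one operator $v u^\ast$ is $\|v\|\,\|u\|$, giving exactly the stated condition number). The resolvent then has the partial-fraction form
\[
(z-A)^{-1}=\sum_{k=1}^N \frac{P_k}{z-\lambda_k}=\frac{P_j}{z-\lambda_j}+R_j(z),
\]
where $R_j(z)=\sum_{k\neq j}P_k/(z-\lambda_k)$ is holomorphic and uniformly bounded on a fixed small disk around $\lambda_j$ (say by a constant $M_j$), since the other eigenvalues are at a fixed positive distance. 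Next I would estimate: if $z\in\sigma_\epsilon(A)\restriction\lambda_j$ then $\|(z-A)^{-1}\|>1/\epsilon$, while the triangle inequality gives $\|(z-A)^{-1}\|\leq \kappa(\lambda_j)/|z-\lambda_j|+M_j$. Combining, $1/\epsilon < \kappa(\lambda_j)/|z-\lambda_j|+M_j$, which rearranges to
\[
|z-\lambda_j| < \frac{\epsilon\,\kappa(\lambda_j)}{1-\epsilon M_j} = \epsilon\,\kappa(\lambda_j)\bigl(1+\epsilon M_j+\mathcal{O}(\epsilon^2)\bigr)=\epsilon\,\kappa(\lambda_j)+\mathcal{O}(\epsilon^2),
\]
valid once $\epsilon M_j<1$. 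Taking the union over $j$ yields the claim.

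The main obstacle is making the "holomorphic remainder is bounded" step honest and uniform: one must first establish that for $\epsilon$ small the component $\sigma_\epsilon(A)\restriction\lambda_j$ is contained in a fixed disk $D_j$ around $\lambda_j$ on which $R_j(z)$ is controlled — otherwise the bound $M_j$ depends on where $z$ is. This follows from $\bigcap_{\epsilon>0}\sigma_\epsilon(A)=\sigma(A)$ together with the boundedness and component structure of $\sigma_\epsilon(A)$, but it should be stated carefully. A secondary point worth a line is the identification $\|P_j\|=\kappa(\lambda_j)$: writing $P_j=v_ju_j^\ast/(u_j^\ast v_j)$ with $v_j,u_j$ the right/left eigenvectors, one checks $\|v_ju_j^\ast\|=\|v_j\|\,\|u_j\|$ directly, so $\|P_j\|=\|u_j\|\,\|v_j\|/|u_j^\ast v_j|=\kappa(\lambda_j)$. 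Everything else is the elementary geometric-series manipulation above.
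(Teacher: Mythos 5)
The paper does not prove Theorem \ref{thm: distinctBF} at all; it simply cites Baumg\"artel's book, so there is no in-text proof to compare against. Your argument via the partial-fraction (spectral projection) decomposition of the resolvent is the standard route to this result and is essentially correct: the identities $(z-A)^{-1}=\sum_k P_k/(z-\lambda_k)$ for a diagonalizable $A$ with distinct eigenvalues, $\|v u^\ast\|=\|v\|\,\|u\|$ for rank-one operators, and hence $\|P_j\|=\|v_j\|\,\|u_j\|/|u_j^\ast v_j|=\kappa(\lambda_j)$, are all right, and the geometric-series expansion of $(1-\epsilon M_j)^{-1}$ gives exactly the claimed $\mathcal{O}(\epsilon^2)$ correction.

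The one point you flag yourself --- confining $\sigma_\epsilon(A)\restriction\lambda_j$ to a fixed disk $D_j$ so that the remainder bound $M_j$ is independent of $z$ --- is indeed the only step that needs an explicit line, and it closes cleanly. Fix a small closed annulus $K_j=\{z:\delta_j\le|z-\lambda_j|\le 2\delta_j\}$ disjoint from $\sigma(A)$. The map $z\mapsto\|(z-A)^{-1}\|$ is finite and continuous on the compact set $K_j$, hence bounded there, say by $1/\epsilon_j$; so for $\epsilon<\epsilon_j$ we have $\sigma_\epsilon(A)\cap K_j=\emptyset$. Since the component $\sigma_\epsilon(A)\restriction\lambda_j$ is connected and contains $\lambda_j$, it cannot cross $K_j$ and therefore lies entirely inside the open disk of radius $\delta_j$. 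Taking $M_j=\sup_{|z-\lambda_j|\le\delta_j}\|R_j(z)\|\le\sum_{k\ne j}\kappa(\lambda_k)/(|\lambda_k-\lambda_j|-\delta_j)<\infty$, your inequality chain then goes through verbatim, and the theorem follows by taking the union over $j$.
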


We can drop the $\mathcal{O}(\epsilon^2)$ term, for which we get an increase in the radius of our inclusion disks by a factor of $N$ \cite[Thm. 4]{bauer1960norms}.
\begin{thm}[Bauer-Fike theorem based on $\kappa(\lambda_j)$]
Suppose $A \in \mathbb{C}^{N \times N}$ has $N$ distinct eigenvalues. Then $\forall \epsilon > 0$,
\[ \sigma_\epsilon(A) \subseteq \bigcup_{j=1}^{N} B\left(\lambda_j, \epsilon N \kappa (\lambda_j)\right). \]
\end{thm}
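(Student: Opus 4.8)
The plan is to use the eigendecomposition of $A$ to write the resolvent as a sum of rank-one operators, bound its norm term by term, and then close with a short pigeonhole-style contradiction. Since $A$ has $N$ distinct eigenvalues it is diagonalizable; write $A = VDV^{-1}$ with $D = \operatorname{diag}(\lambda_1,\dots,\lambda_N)$, let $v_1,\dots,v_N$ be the columns of $V$ (right eigenvectors), and let $w_1^\ast,\dots,w_N^\ast$ be the rows of $V^{-1}$, so that $w_j^\ast v_k = \delta_{jk}$. Then for $z \notin \sigma(A)$ one has the partial-fraction identity
\[ (z-A)^{-1} = V(zI-D)^{-1}V^{-1} = \sum_{j=1}^{N} \frac{1}{z-\lambda_j}\, v_j w_j^\ast. \]

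First I would record the elementary fact that the operator norm of the rank-one matrix $v_j w_j^\ast$ equals $\|v_j\|\,\|w_j\|$ (Euclidean norms), by Cauchy–Schwarz with equality along $w_j/\|w_j\|$. The triangle inequality then gives
\[ \big\|(z-A)^{-1}\big\| \;\le\; \sum_{j=1}^{N} \frac{\|v_j\|\,\|w_j\|}{|z-\lambda_j|}. \]
Next I would identify $\|v_j\|\,\|w_j\|$ with $\kappa(\lambda_j)$. The row $w_j^\ast$ is a left eigenvector for $\lambda_j$, hence a scalar multiple of the left eigenvector $u_j^\ast$ appearing in the definition of $\kappa(\lambda_j)$; writing $u_j^\ast = c_j w_j^\ast$ and using $w_j^\ast v_j = 1$ yields $u_j^\ast v_j = c_j$ and $\|u_j\| = |c_j|\,\|w_j\|$, so $\kappa(\lambda_j) = \|u_j\|\,\|v_j\|/|u_j^\ast v_j| = \|w_j\|\,\|v_j\|$. (In particular this quantity is independent of how the eigenvectors are normalized.) Combining, for every $z \notin \sigma(A)$,
\[ \big\|(z-A)^{-1}\big\| \;\le\; \sum_{j=1}^{N} \frac{\kappa(\lambda_j)}{|z-\lambda_j|}. \]

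Finally I would argue by contradiction. Suppose $z \in \sigma_\epsilon(A)$ but $z \notin \bigcup_{j} B(\lambda_j, \epsilon N \kappa(\lambda_j))$. Since $\kappa(\lambda_j)\ge 1$, the second condition forces $z \notin \sigma(A)$ and $|z-\lambda_j| \ge \epsilon N \kappa(\lambda_j)$ for every $j$, so each summand above is at most $1/(\epsilon N)$ and the sum is at most $1/\epsilon$; this contradicts $\|(z-A)^{-1}\| > 1/\epsilon$. Hence $\sigma_\epsilon(A) \subseteq \bigcup_{j} B(\lambda_j, \epsilon N \kappa(\lambda_j))$. The argument is short, and the only place that needs care is the bookkeeping in the previous paragraph — correctly matching the rows of $V^{-1}$ to the left eigenvectors and verifying $\|v_j\|\|w_j\| = \kappa(\lambda_j)$ regardless of normalization; everything else is the triangle inequality and an averaging estimate. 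One could instead work directly from the $\|(z-A)v\| < \epsilon$ definition by expanding a pseudoeigenvector in the eigenbasis, but the resolvent route is cleaner here.
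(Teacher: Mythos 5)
Your proof is correct. The paper does not prove this theorem itself but cites Bauer and Fike's original paper, and your argument is precisely the classical one used there: expand the resolvent via partial fractions as $\sum_j (z-\lambda_j)^{-1} v_j w_j^\ast$, bound each rank-one term by $\kappa(\lambda_j)/|z-\lambda_j|$, and conclude by contradiction that if $z$ were outside every disk $B(\lambda_j, \epsilon N \kappa(\lambda_j))$, the triangle inequality would force $\|(z-A)^{-1}\| \le 1/\epsilon$, contrary to $z \in \sigma_\epsilon(A)$. The bookkeeping identifying $\|v_j\|\,\|w_j\|$ with $\kappa(\lambda_j)$ independent of normalization is handled correctly, and the observation that $\kappa(\lambda_j)\ge 1$ guarantees $z\notin\sigma(A)$ closes the gap cleanly.
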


The above two theorems give us upper bounds on the pseudospectra of $A$ only when $A$ has $N$ distinct eigenvalues. These results can be generalized for matrices that do not have distinct eigenvalues.  
The following is proven in \cite[\S 52]{trefethen2005spectra}.

\begin{thm}[Asymptotic formula for the resolvent norm]
Let $\lambda_j \in \sigma(A)$ be an eigenvalue of with $k_j$ the size of the largest Jordan block associated to $\lambda_j$. For any $z \in \sigma_\epsilon(A)$, for small enough $\epsilon$,
\[ |z-\lambda_j| \leq (C_j \epsilon)^\frac{1}{k_j}, \]
where $C_j= \| V_j T_j^{k_j -1} U_j^*\|$ and $T=J - \lambda I$.
\end{thm}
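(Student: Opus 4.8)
The plan is to read the pole structure of the resolvent directly off the Jordan form of $A$ and then feed the resulting estimate into the first definition of $\sigma_\epsilon(A)$. Write $A = VJV^{-1}$ with $J = \bigoplus_i J_i$ the Jordan form, and collect all Jordan blocks sharing the eigenvalue $\lambda_j$ into one block $J_j = \lambda_j I + T_j$, so that $T_j$ is nilpotent of index exactly $k_j$ (the size of the largest block at $\lambda_j$), i.e.\ $T_j^{k_j-1}\neq 0$ but $T_j^{k_j}=0$. Let $V_j$ be the columns of $V$ and $U_j^*$ the rows of $V^{-1}$ belonging to this block, so that $(z-A)^{-1} = \sum_i V_i (zI - J_i)^{-1} U_i^*$. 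First I would isolate the single summand coming from $\lambda_j$; for the remaining summands, $zI - J_i$ is invertible on a fixed punctured neighborhood of $\lambda_j$ whenever $\lambda_i \neq \lambda_j$, so those contributions are uniformly bounded there, say by a constant $M$.

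Next I would expand the $\lambda_j$-summand as a (finite) Neumann series: since $zI - J_j = (z-\lambda_j)I - T_j$ and $T_j^{k_j}=0$,
\[ (zI - J_j)^{-1} = \sum_{m=0}^{k_j-1} \frac{T_j^m}{(z-\lambda_j)^{m+1}}. \]
Multiplying by $V_j$ on the left and $U_j^*$ on the right, the top-order term is $\dfrac{V_j T_j^{k_j-1} U_j^*}{(z-\lambda_j)^{k_j}}$ and every other term is $O\!\big(|z-\lambda_j|^{-(k_j-1)}\big)$. Hence, as $z\to\lambda_j$,
\[ \| (z-A)^{-1} \| \;\le\; \frac{\|V_j T_j^{k_j-1}U_j^*\|}{|z-\lambda_j|^{k_j}} \;+\; \frac{c}{|z-\lambda_j|^{k_j-1}} \;+\; M \]
for some constant $c$; equivalently $|z-\lambda_j|^{k_j}\,\|(z-A)^{-1}\| \to C_j$, where $C_j = \|V_j T_j^{k_j-1}U_j^*\|$. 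The one subtlety here is that $|z-\lambda_j|^{k_j}(z-A)^{-1}$ does \emph{not} converge as a matrix — the scalar factor $(z-\lambda_j)^{k_j}/|z-\lambda_j|^{k_j}$ only has modulus $1$ — but its \emph{norm} does converge to $C_j$, because after factoring out this unimodular scalar the remaining lower-order terms each carry a strictly positive power of $|z-\lambda_j|$ and so vanish in the limit, while the operator norm is continuous.

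Finally I would invoke the definition of the pseudospectrum. Fix $\epsilon$ small enough that $\sigma_\epsilon(A)\restriction\lambda_j$ is a connected component separated from the other eigenvalues — possible by the properties of $\sigma_\epsilon(A)$ recalled in Section~\ref{sec: pseudospectra} — so that every $z$ in this component lies in a neighborhood of $\lambda_j$ that shrinks as $\epsilon\to0$. For such $z$ one has $\|(z-A)^{-1}\|>1/\epsilon$, and combining this with the estimate above gives
\[ \frac{1}{\epsilon} \;<\; \frac{C_j + o(1)}{|z-\lambda_j|^{k_j}} \qquad (z\to\lambda_j), \]
which rearranges to $|z-\lambda_j|^{k_j} < (C_j + o(1))\,\epsilon$, i.e.\ $|z-\lambda_j|\le (C_j\epsilon)^{1/k_j}$ for $\epsilon$ small enough (more precisely, for any $\delta>0$ the clean bound holds with $C_j$ replaced by $C_j+\delta$, the $o(1)$ being absorbed, once $\epsilon$ is sufficiently small). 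I expect the main obstacle to be the bookkeeping in the second step — pinning down that the subleading terms within the $\lambda_j$-block and the contributions of the other blocks really are of lower order, so that the constant is exactly $C_j$ — together with the care needed to pass from "$z\in\sigma_\epsilon(A)$" to "$z$ close to $\lambda_j$," which is precisely where the hypothesis that $\epsilon$ be small enters.
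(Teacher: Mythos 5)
Your argument is correct, and it is the standard one: the paper does not prove this theorem itself but simply cites \cite[\S 52]{trefethen2005spectra}, where the proof likewise proceeds by expanding the resolvent $(z-A)^{-1}=\sum_i V_i(zI-J_i)^{-1}U_i^*$ and writing the $\lambda_j$-block as the finite Laurent series $\sum_{m=0}^{k_j-1}T_j^m/(z-\lambda_j)^{m+1}$, so that the pole of order $k_j$ with coefficient $V_jT_j^{k_j-1}U_j^*$ dominates as $z\to\lambda_j$. You have also correctly flagged and handled the two subtleties: that it is the \emph{norm} of $|z-\lambda_j|^{k_j}(z-A)^{-1}$, not the matrix itself, that converges (because the unimodular scalar drops out of the norm and the lower-order terms vanish), and that the clean bound $|z-\lambda_j|\le (C_j\epsilon)^{1/k_j}$ really holds only up to a multiplicative $1+o(1)$, i.e.\ with $C_j$ replaced by $C_j+\delta$ for $\epsilon$ small enough depending on $\delta$; this is an imprecision in the theorem's statement rather than in your proof, and the same caveat appears (implicitly, via the $o(\epsilon^{1/n})$ terms) in the AUDiT theorem that uses it.
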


We extend these results by providing  lower bounds for arbitrary matrices, as well as explicit formulas for the $\epsilon$-pseudospectra of $2\times 2$ matrices.

\section{Pseudospectra of $2 \times 2$ Matrices} \label{sec: 2x2}

The following section presents a complete characterization of the $\epsilon$-pseudospectrum of any $2 \times 2$ matrix. We classify matrices by whether they are diagonalizable or non-diagonalizable and determine the $\epsilon$-pseudospectra for each class. We begin with an explicit formula for computing the norm of a $2 \times 2$ matrix.

Let $A =
\begin{pmatrix}
a & b \\
c & d
\end{pmatrix}$,
with $a, b, c, d \in \mathbb{C}$. Let $s_{\max}$ denote the largest singular value of $A$.

Then,
\begin{align}
||A||^2 	&= s_{\max} =\frac{\Tr(A^\ast A) + \sqrt{\Tr(A^\ast A)^2 - 4 \det(A^\ast A) }}{2} \label{matrixnorm}.
\end{align}

\subsection{Non-diagonalizable $2 \times 2$ Matrices} \label{subsec: non-diag}

Any $2 \times 2$ matrix that is non-diagonalizable must have exactly one eigenvalue of geometric multiplicity one. In this case, we can Jordan-decompose the matrix and use the first definition of pseudospectra to show that $\sigma_\epsilon(A)$ must be a perfect disk. 

\begin{prop} Let $A$ be any non-diagonalizable $2 \times 2$ matrix, and let $\lambda$ denote the eigenvalue of $A$. Write $A=VJV\inv$ where
\begin{equation} V=\begin{pmatrix} a&b\\c&d \end{pmatrix}, \qquad J=\begin{pmatrix} \lambda & 1 \\ 0& \lambda \end {pmatrix}. \label{eq: 22ndsetup} \end{equation}
Given any $\epsilon$,
\begin{equation} \sigma_\epsilon(A) = B\left(\lambda, |k| \right) \end{equation}
where
\begin{equation}
|k| = \sqrt{ C \epsilon + \epsilon^2 } \qquad \text{and} \qquad C=\frac{|a|^2+|c|^2}{|ad-bc|}.    \label{eq: 2x2nondiag}
\end{equation}
\label{nondiagonalizable2x2}
\end{prop}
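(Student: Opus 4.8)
\section*{Proof proposal}

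The plan is to compute $\|(z-A)\inv\|$ directly from the first definition of pseudospectrum and to show that it depends only on $|z-\lambda|$. Set $w=z-\lambda$. Since $A=VJV\inv$ and $J-\lambda I=N$ with $N=\begin{pmatrix}0&1\\0&0\end{pmatrix}$ nilpotent, we have $zI-J=wI-N$, and one checks (multiplying out, using $N^2=0$) that $(wI-N)\inv=\frac1w I+\frac1{w^2}N$. Hence
\[
(z-A)\inv \;=\; V(zI-J)\inv V\inv \;=\; \frac1w\,I \;+\; \frac1{w^2}\,P,\qquad P:=VNV\inv .
\]
A short matrix multiplication gives $P=\frac{1}{ad-bc}\begin{pmatrix}-ac & a^2\\ -c^2 & ac\end{pmatrix}=\frac{1}{ad-bc}\begin{pmatrix}a\\ c\end{pmatrix}\begin{pmatrix}-c & a\end{pmatrix}$, which is rank one and nilpotent ($P^2=VN^2V\inv=0$).

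The key observation is that $\|(z-A)\inv\|$ is a function of $\rho:=|w|=|z-\lambda|$ alone. Every $2\times2$ nilpotent matrix is unitarily equivalent to $\|P\|\,N$ (put it in Schur form, whose diagonal entries are the zero eigenvalues, then conjugate by a diagonal unitary to rotate the off-diagonal entry to $\|P\|$). Because $P$ has rank one, $\|P\|=\frac{\sqrt{|a|^2+|c|^2}\,\sqrt{|a|^2+|c|^2}}{|ad-bc|}=\frac{|a|^2+|c|^2}{|ad-bc|}=C$; note $C>0$ since $V$ invertible forces $(a,c)\neq(0,0)$. Unitary conjugation does not change the operator norm, so
\[
\|(z-A)\inv\| \;=\; \left\|\,\frac1w I+\frac{C}{w^2}N\,\right\| \;=\; \left\|\begin{pmatrix} 1/w & C/w^2\\ 0 & 1/w\end{pmatrix}\right\|.
\]
Applying the $2\times2$ norm formula \eqref{matrixnorm} to this matrix $M$ — here $\Tr(M^\ast M)=\frac{2}{\rho^2}+\frac{C^2}{\rho^4}$ and $\det(M^\ast M)=\frac{1}{\rho^4}$ — yields $\|(z-A)\inv\|^2=\frac{1}{\rho^2}\cdot\frac{2+x+\sqrt{x^2+4x}}{2}$ with $x=C^2/\rho^2$.

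It remains to identify the level set $\|(z-A)\inv\|=1/\epsilon$. Substituting $\rho^2=C\epsilon+\epsilon^2=\epsilon(C+\epsilon)$ gives $x=\frac{C^2}{\epsilon(C+\epsilon)}$, then $x^2+4x=\frac{C^2(C+2\epsilon)^2}{\epsilon^2(C+\epsilon)^2}$, so $2+x+\sqrt{x^2+4x}$ collapses to $\frac{2(C+\epsilon)}{\epsilon}$ and the whole expression equals $\frac1{\epsilon^2}$; thus $\rho=|k|:=\sqrt{C\epsilon+\epsilon^2}$ lies on the $1/\epsilon$ level curve. Since the displayed expression is strictly decreasing in $\rho$ on $(0,\infty)$ (both $1/\rho^2$ and $x$ decrease as $\rho$ grows, and the remaining factor is increasing in $x$), tending to $+\infty$ as $\rho\to0^+$ and to $0$ as $\rho\to\infty$, the inequality $\|(z-A)\inv\|>1/\epsilon$ holds precisely for $0<|z-\lambda|<|k|$; adjoining the eigenvalue $\lambda\in\sigma(A)\subseteq\sigma_\epsilon(A)$ gives $\sigma_\epsilon(A)=B(\lambda,|k|)$ with $C=\frac{|a|^2+|c|^2}{|ad-bc|}$, as claimed.

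The bulk of this is bookkeeping; I expect the only genuinely delicate points to be (i) stating the reduction of a general $2\times2$ nilpotent to $\|P\|N$ correctly, since the off-diagonal entry of $P$ is complex and a phase must be absorbed, and (ii) the algebraic verification that $\rho^2=C\epsilon+\epsilon^2$ is exactly where the resolvent norm equals $1/\epsilon$, together with the monotonicity remark that upgrades ``boundary circle'' to ``filled disk.''
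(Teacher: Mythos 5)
Your proof is correct. It reaches the same answer as the paper but the route is cleaner in two respects worth noting. The paper multiplies out $V(z-J)\inv V\inv$ into an explicit matrix $M$ whose entries involve all of $a,b,c,d,k$, applies the $2\times2$ norm formula \eqref{matrixnorm}, and then \emph{observes} that the resulting expression depends only on $|k|$ --- a fact which is true but requires the reader to notice that both $\Tr(M^\ast M)$ and $\det(M^\ast M)$ collapse to functions of $|k|$. You instead split the resolvent as $\tfrac1w I+\tfrac1{w^2}P$ with $P=VNV\inv$ rank-one nilpotent, and invoke unitary invariance of the operator norm plus the Schur form of a $2\times2$ nilpotent to reduce to $\bigl\|\tfrac1w I+\tfrac{C}{w^2}N\bigr\|$ with $C=\|P\|$; after this reduction the radial dependence is manifest and only the single constant $C$ survives, which you identify immediately from the rank-one factorization $P=\tfrac{1}{ad-bc}\binom{a}{c}(-c,\;a)$. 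You also carry out two steps the paper compresses into the phrase ``Solving for $k$'': the algebraic check that $\rho^2=C\epsilon+\epsilon^2$ lies exactly on the $1/\epsilon$ level set, and the monotonicity-in-$\rho$ argument that upgrades the boundary circle to the open disk (together with $\lambda\in\sigma(A)\subseteq\sigma_\epsilon(A)$ to include the center). The one delicacy you flagged --- absorbing the complex phase of the off-diagonal entry of $P$ by a diagonal unitary --- is handled correctly; nothing is missing.
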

\begin{proof}
Let $z= \lambda + k$ where $k \in \mathbb{C}$. Then we have $(z-A)\inv = V (z-J)\inv V\inv$.

Taking the norm, this yields
\[ \|(z-A)\inv \| = \frac{\|M\|}{|k^2(ad-bc)|}, \quad \text{where}\quad   M = \begin{pmatrix} adk - ac - bck & a^2 \\ -c^2 & -bck + ac +adk \end{pmatrix} \]

From \eqref{matrixnorm}, we obtain that
\[ \epsilon\inv < \|(z-A)\inv \| =\frac{\sqrt{\Tr(M^*M) + \sqrt{\Tr(M^*M)^2 - 4 \det(M^*M)}}}{|k|^2 |ad-bc|\sqrt{2}}. \]
Note that this function depends only on $|k|=|z-\lambda|$; thus for any $\epsilon$, $\sigma_\epsilon(A)$ will be a disk. Solving for $k$ to find the curve bounding the pseudospectrum, we obtain
\[ |k| = \sqrt{ \frac{|a|^2 + |c|^2}{|ad-bc|}\epsilon + \epsilon^2 }. \]\end{proof}

\subsection{Diagonalizable $2 \times 2$ Matrices} \label{subsec: diag}

Diagonalizable $2 \times 2$ matrices must have two distinct eigenvalues or be a multiple of the identity matrix. In either case, the pseudospectra can be described by the following proposition. 
\begin{prop} Let $A$ be any diagonalizable $2 \times 2$ matrix and let $\lambda_1, \lambda_2$ be the eigenvalues of A and $v_1, v_2$ be the eigenvectors associated with the eigenvalues. Then the boundary of $\sigma_\epsilon(A)$ is the set of points z that satisfy the equation
\begin{equation} (\epsilon^2 - |z-\lambda_1|^2)(\epsilon^2 - | z - \lambda_2|^2) - \epsilon^2 |\lambda_{1}-\lambda_{2}|^{2} \cot^2(\theta)=0, \end{equation}
where $\theta$ is the angle between the two eigenvectors.
\label{2x2eq}
\end{prop}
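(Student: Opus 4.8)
The plan is to mimic the non-diagonalizable case: diagonalize $A$, use the first definition of the pseudospectrum (norm of the resolvent), reduce the problem to computing the operator norm of an explicit $2\times 2$ matrix via formula \eqref{matrixnorm}, and then read off the level curve. Write $A = V D V^{-1}$ with $D = \mathrm{diag}(\lambda_1,\lambda_2)$ and the columns of $V$ equal to (unit) eigenvectors $v_1, v_2$. For $z \notin \sigma(A)$ we have $(z-A)^{-1} = V (z-D)^{-1} V^{-1}$, where $(z-D)^{-1} = \mathrm{diag}\big((z-\lambda_1)^{-1}, (z-\lambda_2)^{-1}\big)$. So $z$ lies on the boundary of $\sigma_\epsilon(A)$ exactly when $\|V (z-D)^{-1} V^{-1}\| = 1/\epsilon$, i.e. when the largest singular value of $M := V (z-D)^{-1} V^{-1}$ equals $1/\epsilon$.

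First I would set up coordinates so that the computation is clean. Since the pseudospectrum is invariant under unitary change of basis (stated in the normal-matrix discussion), I may assume $v_1 = (1,0)^T$ and $v_2 = (\cos\theta, \sin\theta)^T$ where $\theta$ is the angle between the eigenvectors; then $V = \begin{pmatrix} 1 & \cos\theta \\ 0 & \sin\theta \end{pmatrix}$, so $\det V = \sin\theta$ and $V^{-1} = \frac{1}{\sin\theta}\begin{pmatrix} \sin\theta & -\cos\theta \\ 0 & 1\end{pmatrix}$. Next I would compute $M = V (z-D)^{-1} V^{-1}$ explicitly; writing $\mu_i = (z-\lambda_i)^{-1}$ this is an explicit $2\times 2$ matrix whose entries are rational functions of $z, \lambda_1, \lambda_2$ and $\cot\theta$. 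The condition $s_{\max}(M) = 1/\epsilon$ is, by \eqref{matrixnorm}, the single scalar equation
\[
\frac{\Tr(M^\ast M) + \sqrt{\Tr(M^\ast M)^2 - 4\det(M^\ast M)}}{2} = \frac{1}{\epsilon^2}.
\]
Here $\det(M^\ast M) = |\det M|^2 = |\mu_1 \mu_2|^2 = \big(|z-\lambda_1|\,|z-\lambda_2|\big)^{-2}$, which is independent of $\theta$, and $\Tr(M^\ast M) = \|M\|_F^2$ is a sum of squared moduli of the four entries of $M$. I would isolate the square root, square, and simplify; the $\theta$-dependence should collapse into a single $\cot^2\theta$ term multiplying $|\lambda_1-\lambda_2|^2$, and after clearing denominators (multiplying through by $|z-\lambda_1|^2 |z-\lambda_2|^2 \epsilon^2$ and regrouping) the equation should rearrange into the claimed form
\[
(\epsilon^2 - |z-\lambda_1|^2)(\epsilon^2 - |z-\lambda_2|^2) - \epsilon^2 |\lambda_1-\lambda_2|^2 \cot^2\theta = 0.
\]
As a sanity check I would verify the normal case $\theta = \pi/2$: then $\cot\theta = 0$ and the equation factors as $(\epsilon^2 - |z-\lambda_1|^2)(\epsilon^2-|z-\lambda_2|^2) = 0$, giving the two circles of radius $\epsilon$ promised by Theorem \ref{normalpseudo}; and the diagonal-multiple-of-identity case $\lambda_1 = \lambda_2$ similarly reduces correctly.

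The main obstacle is the algebraic bookkeeping in simplifying $\Tr(M^\ast M)$ and massaging the squared equation into exactly the stated factored form — in particular, checking that all $\theta$-dependence really does reduce to the single $\cot^2\theta$ term and that no spurious cross terms survive. A secondary point worth stating carefully is the reduction to the normalized $V$ above: one should note that rescaling the eigenvectors does not change $\sigma_\epsilon(A)$ (only $\kappa$-type quantities), and that the angle $\theta$ between $v_1$ and $v_2$ is the only basis-independent datum of $V$ that enters, which is why the final formula depends on $\theta$ alone rather than on the individual entries of $V$. With the coordinates chosen as above, the computation is a finite, if somewhat tedious, manipulation, and the identity $\Tr(M^\ast M)^2 - 4\det(M^\ast M) = (s_{\max}^2 - s_{\min}^2)^2 \geq 0$ guarantees the square root is real throughout, so no case analysis on signs is needed.
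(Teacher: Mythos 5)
Your approach is essentially the paper's: diagonalize $A$, compute $\|(z-A)^{-1}\| = \|V(z-D)^{-1}V^{-1}\|$ via the $2\times 2$ norm formula \eqref{matrixnorm}, and solve for the $1/\epsilon$ level curve. The one refinement worth noting is your normalization of $V$ to $\begin{pmatrix}1 & \cos\theta\\ 0 & \sin\theta\end{pmatrix}$ via unitary invariance (after first rescaling $v_2$ by a phase so that $\langle v_1,v_2\rangle$ is real and nonnegative, which leaves $A=VDV^{-1}$ unchanged); this gives $M = V(z-D)^{-1}V^{-1} = \begin{pmatrix}\mu_1 & (\mu_2-\mu_1)\cot\theta\\ 0 & \mu_2\end{pmatrix}$ with $\mu_i = (z-\lambda_i)^{-1}$, so that $\Tr(M^*M) = |\mu_1|^2+|\mu_2|^2+|\mu_1-\mu_2|^2\cot^2\theta$ and $\det(M^*M)=|\mu_1\mu_2|^2$ fall out immediately, and using $|\mu_1-\mu_2|^2 = |\lambda_1-\lambda_2|^2\,|\mu_1|^2|\mu_2|^2$ the squared level-set equation $1 - \epsilon^2\Tr(M^*M)+\epsilon^4\det(M^*M)=0$ does rearrange exactly to the claimed $(\epsilon^2-|z-\lambda_1|^2)(\epsilon^2-|z-\lambda_2|^2)-\epsilon^2|\lambda_1-\lambda_2|^2\cot^2\theta=0$ after multiplying through by $|z-\lambda_1|^2|z-\lambda_2|^2$, confirming the step you deferred.
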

\label{diag22}
\begin{proof}
Since $A$ is diagonalizable, we can write $A=VDV^{-1}$ where
\begin{equation}V=\begin{pmatrix} a&b\\c&d \end{pmatrix} \qquad D=\begin{pmatrix} \lambda_1 &0\\ 0& \lambda_2 \end{pmatrix} \label{eq: 22dsetup} \end{equation}

Without loss of generality, let $z=\lambda_1+k$.

Let $\gamma=\lambda_1-\lambda_2$ and $r=ad-bc$. Then,
\[ \left \| (z-A)\inv \right \| = \left \| V (z-D)\inv V\inv \right\| = \frac{\|M\|}{| rk(\gamma +k)|},\] 
where \[M=\begin{pmatrix} ad\gamma + rk & -ab\gamma \\ cd\gamma & -bc\gamma + rk \end{pmatrix}.\]

Calculating $\Tr(M^*M)$, we obtain
\begin{equation} \Tr(M^* M) = |r|^2 \left( |\gamma + k|^2 + |k|^2 + |\gamma|^2 \cot^2 \theta \right), \label{simplifiedtrace} \end{equation}
where $\theta$ is the angle between the two eigenvectors, which are exactly the columns of $V$. 
For the determinant, we have
\begin{align}
\det(M^*M) 
&= |r|^4|k|^2|k+\gamma|^2
\end{align}
Plugging the above into equation \eqref{matrixnorm}, we get
\[\quad \epsilon\inv=\|(z-A)\inv \| = \sqrt{\frac{\Tr(M^*M)+ \sqrt {\Tr(M^*M)^2 - 4 \det(M^*M)}}{2|r|^2 | k(\gamma +k)|^2}}. \]

Re-writing and simplifying, we obtain the curve describing the boundary of the pseudospectrum:
\[(\epsilon^2 - |k|^2)(\epsilon^2 - |k+\gamma|^2) - \epsilon^2 |\gamma|^2 \cot^2\theta = 0. \]
\end{proof}

Note that for normal matrices, the eigenvectors are orthogonal. Therefore the equation above reduces to
\begin{equation} (\epsilon^2 - |k|^2)(\epsilon^2 - |k+\gamma|^2)  = 0  \end{equation}
which describes two disks of radius $\epsilon$ centered around $\lambda_1, \lambda_2$, as we expect.

When the matrix only has one eigenvalue and is still diagonalizable (i.e. when it is a multiple of the identity), then we obtain
\[ (\epsilon^2 - |k|^2)^2  = 0, \]
which is a disk of radius $\epsilon$ centered around the eigenvalue.

One consequence to note of Proposition \ref{2x2eq} is that the shape of $\sigma_\epsilon(A)$ is dependent on both the eigenvalues \emph{and} the eigenvectors of the matrix $A$.
Another less obvious consequence is that the pseudospectrum of a $2 \times 2$ matrix approaches a union of disks as $\epsilon$ tends to 0. 
\begin{prop}
Let $A$ be a diagonalizable $2 \times 2$ matrix with two distinct eigenvalues, $\lambda_1, \lambda_2$. Then, $\sigma_\epsilon(A)\restriction_{\lambda_i}$ asymptotically tends toward a disk.
In particular, \[\frac{r_\text{max}(\lambda_i)}{r_\text{min}(\lambda_i)} = 1 + \mathcal{O}(\epsilon),\] where $r_\text{max}(\lambda_i), r_\text{min}(\lambda_i)$ are the maximum and minimum distances from $\lambda_i$, to $\partial \sigma_\epsilon(A) \restriction_{\lambda_i}$.
Moreover, for $A$ diagonalizable but not normal, $\sigma_\epsilon(A)\restriction_{\lambda_i}$ is never a perfect disk.
\label{2x2disks}
\end{prop}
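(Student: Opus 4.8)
The plan is to read everything off the boundary equation of Proposition \ref{2x2eq}. Put $\gamma = \lambda_1 - \lambda_2 \neq 0$, let $\theta$ be the angle between the eigenvectors as in that proposition, and set
\[
P(z) = \bigl(\epsilon^2 - |z-\lambda_1|^2\bigr)\bigl(\epsilon^2 - |z-\lambda_2|^2\bigr) - \epsilon^2|\gamma|^2\cot^2\theta ,
\]
so that $\partial\sigma_\epsilon(A) \subseteq \{P = 0\}$, and note $\cot^2\theta > 0$ exactly when $A$ is non-normal. The first two assertions come from a rescaled local analysis of $P = 0$ near each eigenvalue; the last comes from an exact algebraic argument showing a circle cannot lie on $\{P=0\}$ unless $\cot\theta = 0$. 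By symmetry I work only near $\lambda_1$.

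For the asymptotics, I would first record the a priori localization: by Theorems \ref{normalpseudo} and \ref{bfv1}, $B(\lambda_1,\epsilon) \subseteq \sigma_\epsilon(A)\restriction_{\lambda_1} \subseteq B(\lambda_1,\epsilon\kappa(V))$ once $\epsilon$ is small enough that the two balls $B(\lambda_j,\epsilon\kappa(V))$ are disjoint, so the relevant boundary component sits in an annulus with both radii comparable to $\epsilon$. Writing a boundary point as $z = \lambda_1 + \rho e^{i\phi}$ and using $|z - \lambda_2|^2 = \rho^2 + 2\rho|\gamma|\cos\alpha + |\gamma|^2$, where $\alpha$ is the angle between $z-\lambda_1$ and $\gamma$, the equation $P = 0$ becomes
\[
(\epsilon^2 - \rho^2)\bigl(\epsilon^2 - \rho^2 - 2\rho|\gamma|\cos\alpha - |\gamma|^2\bigr) = \epsilon^2 |\gamma|^2 \cot^2\theta .
\]
The substitution $\rho = \epsilon\sigma$ removes a factor $\epsilon^2$ and leaves $G(\sigma,\epsilon;\alpha)=0$ with $G(\sigma,0;\alpha) = -|\gamma|^2(\csc^2\theta - \sigma^2)$, whose unique positive root $\sigma = \csc\theta$ is simple and independent of $\alpha$. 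Since $G$ is polynomial in $(\sigma,\epsilon,\cos\alpha)$ and $\alpha \in [0,2\pi]$ is compact, the implicit function theorem produces, for all small $\epsilon$, one smooth branch
\[
\rho(\alpha) = \epsilon\csc\theta - \frac{\cot^2\theta}{|\gamma|}\,(\cos\alpha)\,\epsilon^2 + \mathcal{O}(\epsilon^3),
\]
with error uniform in $\alpha$; together with the localization above this branch is exactly $\partial\sigma_\epsilon(A)\restriction_{\lambda_1}$, a star-shaped curve about $\lambda_1$. Hence $\tfrac1\epsilon\bigl(\sigma_\epsilon(A)\restriction_{\lambda_1} - \lambda_1\bigr)$ converges to the disc $B(0,\csc\theta)$, which is the "asymptotically a disc" claim, and evaluating at $\cos\alpha = \mp 1$ gives $r_{\max}(\lambda_1)/r_{\min}(\lambda_1) = 1 + \mathcal{O}(\epsilon)$.

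For the last assertion, suppose for contradiction $\sigma_\epsilon(A)\restriction_{\lambda_1} = B(z_0,R)$ with $R>0$; then the whole circle $|z-z_0| = R$ lies on $\{P=0\}$. On that circle $\bar z = \bar z_0 + R^2/(z-z_0)$, and substituting this into $P=0$ and clearing the factor $(z-z_0)^2$ yields a polynomial identity, valid for all $z$ (both sides agree on the infinitely many points of the circle),
\[
A_1(z)\,A_2(z) = \epsilon^2 |\gamma|^2 \cot^2\theta\,(z-z_0)^2, \qquad A_j(z) = \epsilon^2(z-z_0) - (\bar z_0 - \bar\lambda_j)(z-\lambda_j)(z-z_0) - R^2(z-\lambda_j),
\]
with $\deg A_j \le 2$. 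For non-normal $A$ the right side is a nonzero multiple of $(z-z_0)^2$, so $A_1A_2$ has $z_0$ as its only zero; since $A_j(z_0) = -R^2(z_0-\lambda_j)$ and $R\neq 0$, some $A_j$, say $A_1$, vanishes at $z_0$, forcing $z_0 = \lambda_1$, whence $A_1(z) = (\epsilon^2 - R^2)(z-\lambda_1)$. If $R^2 = \epsilon^2$ then $A_1 \equiv 0$, contradicting that the product is nonzero; otherwise $A_2$ must be a nonzero scalar multiple of $(z-\lambda_1)$, hence of degree $1$, yet the $z^2$-coefficient of $A_2$ is $-(\bar\lambda_1 - \bar\lambda_2)\neq 0$ — a contradiction. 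So no component of $\sigma_\epsilon(A)$ is a perfect disc, for any $\epsilon>0$.

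The two displayed reductions of $P=0$ are routine. The step needing real care is in the first half: one must verify that for small $\epsilon$ the component $\partial\sigma_\epsilon(A)\restriction_{\lambda_1}$ is globally the single graph $\rho = \rho(\alpha)$ above — no spurious branch of $\{P=0\}$ or extra sheet of the resolvent-norm level set intervening — and that the $\mathcal{O}(\epsilon^k)$ bounds hold uniformly in $\alpha$, so that $r_{\max},r_{\min}$ really are the max and min of $\rho(\alpha)$; this is precisely what the a priori inclusions and the compactness of $\alpha$ buy. The algebraic argument in the second half, by contrast, is exact and requires no smallness of $\epsilon$.
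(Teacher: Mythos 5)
Your proof is correct but takes a genuinely different route from the paper's on both halves of the statement.

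For the asymptotic-disk part, the paper first proves geometrically (by a rotation/monotonicity argument) that both $z_{\max}$ and $z_{\min}$ lie on the line through $\lambda_1$ and $\lambda_2$, then solves the resulting one-dimensional equations \emph{exactly} for $r_{\max}$ and $r_{\min}$, and expands. Your argument instead rescales $\rho = \epsilon\sigma$ in the boundary equation, applies the implicit function theorem at $\sigma = \csc\theta$ uniformly in $\alpha$, and reads off the expansion $\rho(\alpha) = \epsilon\csc\theta - \tfrac{\cot^2\theta}{|\gamma|}\epsilon^2\cos\alpha + \mathcal{O}(\epsilon^3)$. Both give the same ratio $1 + \tfrac{2\cos\theta\cot\theta}{|\gamma|}\epsilon + \mathcal{O}(\epsilon^2)$; the paper's version also yields closed-form expressions for $r_{\max}, r_{\min}$, while yours makes the uniform dependence on $\alpha$ and the star-shapedness of the boundary more transparent. (You should note, as you partly do, that the reduction to a single IFT branch in the annulus $\epsilon \le \rho \le \epsilon\kappa(V)$ also uses $1 < \csc\theta < \kappa(V)$, which holds for non-normal $A$.) For the never-a-perfect-disk part, the paper simply observes that the asymptotic ratio is $\neq 1$, which only rules out disks for \emph{small} $\epsilon$. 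Your argument — substituting the circle relation $\bar z = \bar z_0 + R^2/(z - z_0)$ into $P = 0$, clearing denominators to get the polynomial identity $A_1(z)A_2(z) = \epsilon^2|\gamma|^2\cot^2\theta\,(z - z_0)^2$, and comparing degrees — is exact and proves the stronger statement that no component of $\sigma_\epsilon(A)$ is an open disk for \emph{any} $\epsilon > 0$. This algebraic argument is a genuine improvement over what the paper establishes for that clause.
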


\begin{proof}

Let $\epsilon$ be small enough so that the $\epsilon$-pseudospectrum is disconnected. Without loss of generality, we will consider $\sigma_\epsilon(A)\restriction_{\lambda_1}$. 

Let $z_\text{max} \in \partial \sigma_\epsilon(A)$ such that $|z_\text{max}-\lambda_1|$ is a maximum. Set $r_\text{max}(\lambda)=|z_\text{max}-\lambda|$. Consider the line joining $\lambda_1$ and $\lambda_2$.
Suppose for contradiction that $z_\text{max}$ did not lie on this line. Then, rotate $z_\text{max}$ in the direction of $\lambda_2$ so that it is on this line, and call this new point $z'$.
Note that $|z'-\lambda_2| < |z_\text{max} - \lambda_2|$, but $|z'-\lambda_1| = |z_\text{max} - \lambda_1|$. As such, we get that
\[ (|z'-\lambda_1|^2-\epsilon^2)(| z' - \lambda_2|^2-\epsilon^2) < (|z_\text{max} - \lambda_1|^2-\epsilon^2)(|z_\text{max} - \lambda_2|^2-\epsilon^2) = \epsilon^2 |\lambda_1-\lambda_2|^2 \cot^2 \theta \]
Thus, from Proposition \ref{2x2eq} $z' \in \sigma_\epsilon(A)$ but $z'$ is not on the boundary of $\sigma_\epsilon(A)$. Starting from $z'$ and traversing the line joining $\lambda_1$ and $\lambda_2$, we can find $z'' \in \partial \sigma_\epsilon(A)$ such that $|z''-\lambda_1| > |z'-\lambda_1| = |z_\text{max} - \lambda_1|$. This contradicts our choice of $z_\text{max}$ and so $z_\text{max}$ must be on the line joining $\lambda_1$ and $\lambda_2$.
A similar argument shows that $z_\text{min}$ must also be on this line, where $z_\text{min} \in \partial \sigma_\epsilon(A)$ such that $r_\text{min}=|z_\text{min} - \lambda_1|$ is a minimum.

Since $z_\text{max}$ is on the line joining $\lambda_1$ and $\lambda_2$, we have the exact equality 
\[|z_\text{max} - \lambda_2| = |z_\text{max} - \lambda_1 | + |\lambda_2-\lambda_1 |. \]
Let $y = | \lambda_2 - \lambda_1 |$.
The equation describing $r_\text{max}(\lambda_1)$ becomes
\[ \left(r_\text{max}(\lambda_1)^2-\epsilon^2\right) \left( (y - r_\text{max} (\lambda_1))^2 - \epsilon^2\right) = \epsilon^2 y^2 \cot^2 \theta \]
Similarly, we can obtain the equation for $r_\text{min}(\lambda_1)$. Solving for $r_\text{max}(\lambda_1)$ and $r_\text{min}(\lambda_1)$, we get
\begin{align} r_\text{max}(\lambda_1) = \frac{1}{2} \left( y - \sqrt{y^2 + 4 \epsilon^2 - 4 y \epsilon \csc \theta } \right) \\
r_\text{min}(\lambda_1) = \frac{1}{2} \left( \sqrt{y^2 + 4 \epsilon^2 + 4 y \epsilon \csc \theta } -y \right) \end{align}

For $\epsilon$ small, we can use the approximation $(1+\epsilon)^p = 1 + p \epsilon +  \mathcal{O}(\epsilon^2)$. Then,
\begin{align}
\frac{r_\text{max}(\lambda_1)}{r_\text{min}(\lambda_1)} &= \frac{1 - \sqrt{1 + 4 (\epsilon/y)^2 - 4 (\epsilon/y) \csc \theta } }{\sqrt{1 + 4 (\epsilon/y)^2 + 4 (\epsilon/y) \csc \theta} -1} \nonumber \\
&= \frac{ 1 + \eta \epsilon + \mathcal{O} (\epsilon^2) }{1 - \eta \epsilon + \mathcal{O} (\epsilon^2)}
\end{align}
where $\eta = \frac{\cos \theta \cot \theta}{y} $.
Using the geometric series approximation $\frac{1}{1-x} = 1+x+\mathcal{O}(x^2)$, we find that
\begin{align}
\frac{r_\text{max}}{r_\text{min}} 
&= 1 + \frac{(2 \cos \theta \cot \theta ) \epsilon}{y} + \mathcal{O}(\epsilon^2) \label{ratio}
\end{align}
Thus, $\sigma_\epsilon(A)$ tends towards a disk. Moreover, if $A$ is diagonalizable but not normal, then the eigenvectors are linearly independent but not orthogonal, so $\theta$ is not a multiple of $\pi/2$ or $\pi$, and therefore $\cos \theta \cot \theta \neq 0$ and $\frac{r_\text{max}(\lambda)}{r_\text{min}(\lambda)} \neq 1$.
\end{proof}

This result can be observed by looking at plots of the pseudospectra of diagonalizable $2 \times 2$ matrices.
\begin{figure}[h]
\includegraphics[scale=0.5]{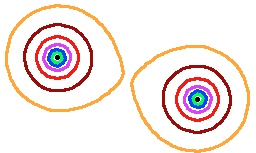} \qquad  \qquad \quad
\includegraphics[scale=.5]{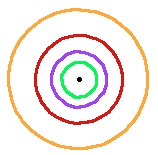}
\caption{$\epsilon$-pseudospectra of a diagonalizable $2 \times 2$ matrix}
\label{22andzoom}
\end{figure}

The image on the left shows the pseudospectra of a particular $2 \times 2$ matrix. One can see that for large enough values of $\epsilon$, the pseudospectra around either eigenvalue is not a perfect disk. The image on the right is the pseudospectra of the same matrix (restricted to one eigenvalue), with smaller values of epsilon. Here, the pseudospectra appear to converge to disks. We find that this result holds in general for any $N \times N$ matrix and this is proven in the following section. 

\section{Asymptotic Union of Disks Theorem} \label{sec: audit}
In Propositions \ref{nondiagonalizable2x2} and \ref{diag22}, we showed that the $\epsilon$-pseudospectra for all $2 \times 2$ matrices are disks or asymptotically converge to a union of disks. We now explore whether this behavior holds in the general case. It is possible to find matrices whose $\epsilon-$pseudospectra exhibit pathological properties for large $\epsilon$; for example, the non-diagonalizable matrix given in Figure \ref{fig: toeplitz} has, for larger $\epsilon$, an $\epsilon$-pseudospectrum that is not convex and not simply connected.

\begin{figure}
\begin{subfigure}[h!]{0.9\textwidth}
$ \begin{pmatrix}
-1 & -10 & -100 & -1000 & -10000 \\
0 & -1 & -10 & -100 & -1000 \\
0 & 0 & -1 & -10 & -100 \\
0 & 0 & 0 & -1 & -10 \\
0 & 0 & 0 & 0 & -1 
\end{pmatrix} $
\end{subfigure}
\hspace{-5cm} 
\begin{subfigure}[h!]{0.3\textwidth}
\includegraphics[scale=0.5]{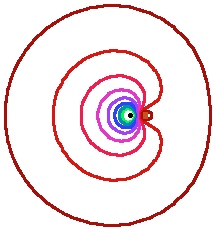}
\label{fig: toep}
\end{subfigure}
\caption{Pseudospectra of a Toeplitz matrix}
\label{fig: toeplitz}
\end{figure}

Thus, pseudospectra may behave poorly for large enough $\epsilon$; however, in the limit as $\epsilon \to 0$, these properties disappear and the pseudospectra behave as disks centered around the eigenvalues with well-understood radii. In order to understand this asymptotic behavior, we will use the following set-up (which follows \cite{moro1997lidskii}).

Let $A \in \mathbb{C}^{N \times N}$ and fix $\lambda \in \sigma(A)$. Write the Jordan decomposition of $A$ as such
\[ \begin{pmatrix}J & \\[0.5em] & \hat{J} \end{pmatrix} = \begin{pmatrix} Q \\[0.5em] \hat{Q}\end{pmatrix} A \begin{pmatrix} P & \hat{P} \end{pmatrix}, \qquad \begin{pmatrix} Q \\[0.5em] \hat{Q}\end{pmatrix}  \begin{pmatrix} P & \hat{P} \end{pmatrix} = I \]
where $J$ consists of Jordan blocks $J_1,\ldots,J_m$ corresponding to the eigenvalue $\lambda$. $\hat{J}$ consists of Jordan blocks corresponding to the other eigenvalues of $A$.

Let $n$ be the size of the largest Jordan block corresponding to $\lambda$, and suppose there are $\ell$ Jordan blocks corresponding to $\lambda$ of size $n \times n$.
Arrange the Jordan blocks in $J$ in weakly decreasing order, according to size. That is,
\[ \dim{(J_1)} = \cdots = \dim (J_\ell) > \dim{(J_{\ell+1})} \geq \cdots \geq \dim{(J_m)} \]
where $J_1, \ldots , J_\ell $ are $n \times n$.

Further partition $P$,
\[ P =\begin{pmatrix} P_1 \ , & \ldots \ , & P_\ell \ , & \ldots \ ,& P_m \end{pmatrix} \]
in a way that agrees with the above partition of $J$, so that the first column, $x_{j}$, of each $P_j$ is a right eigenvector of $A$ associated with $\lambda$. We also partition $Q$ likewise
 \[ Q = \begin{pmatrix}
 Q_1 \\[0.5em]
 \vdots \\
 Q_\ell \\[0.5em]
 \vdots \\[0.5em]
 Q_m \end{pmatrix}. \]
 The last row, $y_j$, of each $Q_j$ is a left eigenvector of $A$ corresponding to $\lambda$.
 \newline
 We now build the matrices
 \[ Y= \begin{pmatrix}
 y_1 \\[0.5em]
 y_2 \\[0.5em]
 \vdots\\[0.5em]
 y_\ell \end{pmatrix}, \qquad
 X=\begin{pmatrix} x_1 \ , & x_2\ , & \ldots \ ,& x_\ell \end{pmatrix}, \]
 where $X$ and $Y$ are the matrices of right and left eigenvectors, respectively, corresponding to the Jordan blocks of maximal size for $\lambda$.

The following theorem is presented by Moro, Burke, and Overton \cite{moro1997lidskii} and due to Lidskii \cite{Lidskii196673}.
\begin{thm}[Lidskii \cite{Lidskii196673}] \label{thm: Lidskii}
Given $l, n$ as defined above corresponding to the matrix $A$, there are $\ell n$ eigenvalues of the perturbed matrix $A+\epsilon E$ admitting a first order expansion
\[ \lambda_{j, k}(\epsilon) = \lambda + (\gamma_j \epsilon)^{1/n} + o(\epsilon^{1/n}) \]
for $j=1, \ldots, \ell,  \; \; k = 1, \ldots, n$, where $\gamma_j$ are the eigenvalues of $YEX$ and the different values of $\lambda_{j, k}(\epsilon)$ for $k =1, \ldots, n$ are defined by taking the distinct $n^{th}$ roots of $\gamma_j$.
\end{thm}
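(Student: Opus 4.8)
The plan is to follow the reduction of Moro, Burke and Overton and boil the statement down to the analysis of a single renormalized characteristic determinant. Since the spectrum is invariant under similarity, conjugating $A+\epsilon E$ by the transformation $V=(P\,\hat P)$ that puts $A$ in Jordan form replaces it by $V^{-1}(A+\epsilon E)V=(J\oplus\hat J)+\epsilon F$, where $F=V^{-1}EV$; writing $z=\lambda+\mu$, the eigenvalues of $A+\epsilon E$ near $\lambda$ are precisely the small roots $\mu$ of $\det\!\big(\mu I-(J\oplus\hat J-\lambda I)-\epsilon F\big)=0$. Partition $F=(F_{ij})_{i,j=1,2}$ so that block $1$ collects all Jordan blocks for $\lambda$ and block $2$ the rest. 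Because $\hat J$ has no eigenvalue equal to $\lambda$, the block $\mu I-\hat J+\lambda I-\epsilon F_{22}$ is invertible for $\mu,\epsilon$ small, so a Schur–complement factorization shows the eigenvalues near $\lambda$ are exactly the small roots of
\[ \det\!\big(\mu I-\mathcal N-\epsilon F_{11}-\epsilon^2 G(\mu,\epsilon)\big)=0, \]
where $\mathcal N=N_{n_1}\oplus\cdots\oplus N_{n_m}$ is the direct sum of the nilpotent Jordan blocks for $\lambda$ (with $n_1=\cdots=n_\ell=n$ maximal) and $G$ is holomorphic and bounded near $(0,0)$.

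Next I would renormalize. Put $\mu=\epsilon^{1/n}\xi$ and conjugate the $k$-th nilpotent block by $\operatorname{diag}(1,\epsilon^{-1/n},\dots,\epsilon^{-(n_k-1)/n})$. Under this conjugation the $(p,q)$ entry of the $(j,k)$ sub-block of $\epsilon F_{11}$ acquires a factor $\epsilon^{(q-p)/n}$, hence becomes $O(\epsilon^{1+(q-p)/n})$; within or between maximal blocks this is $O(\epsilon^{1/n})$, of exact order $\epsilon^{1/n}$ only at the position $(p,q)=(n,1)$, where the entry of $F_{11}$ equals $y_jEx_k=(YEX)_{jk}$ since $y_j$ is the last row of $Q_j$ and $x_k$ the first column of $P_k$. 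One checks likewise that all contributions from the smaller blocks, from the cross terms, and from $\epsilon^2G$ are of order strictly below $\epsilon^{1/n}$. Dividing the determinant by $\epsilon^{N_\lambda/n}$, with $N_\lambda=\sum_k n_k$ the algebraic multiplicity of $\lambda$, the equation converges, uniformly for $\xi$ in compact sets, to
\[ \xi^{\,N_\lambda-\ell n}\,\det\!\big(\xi^n I_\ell-YEX\big)=0, \]
the factor $\det(\xi^nI_\ell-YEX)$ coming from the $\ell n\times\ell n$ maximal part (a rank-one Sylvester determinant identity reduces the companion-type determinant $\det(\xi I_{\ell n}-N_n^{\oplus\ell}-\hat E_0)$ to it) and the factor $\xi^{\,N_\lambda-\ell n}$ from the smaller blocks.

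Finally I would invoke Hurwitz/Rouché: on a fixed disk $|\xi|\le R$ the renormalized equation has, for $\epsilon$ small, exactly $N_\lambda$ roots, and these converge to the roots of the limiting polynomial. Its nonzero roots are the $\ell n$ numbers $\xi$ with $\xi^n$ an eigenvalue $\gamma_j$ of $YEX$ — the $n$ distinct $n$-th roots of each $\gamma_j$ — so there are $\ell n$ root branches $\xi_{j,k}(\epsilon)$ with $\xi_{j,k}(\epsilon)^n=\gamma_j+o(1)$; undoing the scaling gives the $\ell n$ eigenvalues $\lambda_{j,k}(\epsilon)=\lambda+\epsilon^{1/n}\xi_{j,k}(\epsilon)=\lambda+(\gamma_j\epsilon)^{1/n}+o(\epsilon^{1/n})$, while the remaining $N_\lambda-\ell n$ eigenvalues near $\lambda$ are $o(\epsilon^{1/n})$ and are not claimed to admit a first-order expansion.

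The step I expect to be the main obstacle is the renormalization bookkeeping in the second paragraph: proving rigorously that after the diagonal conjugation and division by $\epsilon^{N_\lambda/n}$ every entry from the smaller blocks, from the off-diagonal blocks of $F_{11}$, and from the Schur–complement term $\epsilon^2G$ is $o(\epsilon^{1/n})$, so that only the $\xi I$, the structural superdiagonals, and the matrix $YEX$ (sitting in the $(n,1)$-slots) survive. Equivalently, this is the Newton-polygon statement that the lowest-order-in-$\epsilon$ part of the determinant is precisely $\big(\prod_{k>\ell}\mu^{n_k}\big)\det(\mu^nI_\ell-\epsilon YEX)$. Some care is also needed when $YEX$ is singular or non-diagonalizable: the $\gamma_j$ and the identity $\det(\xi^nI_\ell-YEX)=\prod_j(\xi^n-\gamma_j)$ still make sense with multiplicities, and the asserted first-order expansion persists by continuity of roots.
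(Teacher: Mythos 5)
The paper does not prove Theorem~\ref{thm: Lidskii}; it is quoted as a known result due to Lidskii, in the formulation of Moro, Burke and Overton \cite{moro1997lidskii}, and simply cited. So there is no in-paper proof to compare with.

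That said, your sketch correctly reproduces the Newton-diagram argument of \cite{moro1997lidskii}: conjugate by the Jordan transformation, use a Schur complement to decouple the $\lambda$-blocks from $\hat J$, introduce the scaling $\mu=\epsilon^{1/n}\xi$ together with the diagonal conjugation $\operatorname{diag}(1,\epsilon^{-1/n},\dots)$ on each nilpotent block, and observe that after dividing by the correct power of $\epsilon$ the leading term of the characteristic determinant is $\xi^{N_\lambda-\ell n}\det(\xi^nI_\ell-YEX)$, the matrix $YEX$ entering exactly through the $(n,1)$ corners of the maximal blocks because $y_j$ is the last row of $Q_j$ and $x_k$ the first column of $P_k$. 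The Rouch\'e/Hurwitz step then yields the $\ell n$ branches $\lambda_{j,k}(\epsilon)=\lambda+(\gamma_j\epsilon)^{1/n}+o(\epsilon^{1/n})$. This is the standard route and it is sound. Two remarks. First, you are candid that the exponent bookkeeping (showing every term other than $\xi I$, the rescaled superdiagonals, and the $(n,1)$ corners is $o(\epsilon^{1/n})$ after normalization) is the real work; that is exactly where a complete proof must sweat, and in \cite{moro1997lidskii} this is organized via the Newton polygon of the characteristic polynomial in the two variables $(\mu,\epsilon)$, which would be worth stating explicitly rather than leaving as a claim. Second, the statement as quoted implicitly needs a nondegeneracy hypothesis: when $\gamma_j=0$ there are not $n$ distinct $n$-th roots, and the first-order expansion as written degenerates; Moro--Burke--Overton state the result under the assumption that $YEX$ is nonsingular (so every $\gamma_j\neq0$). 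Your closing sentence gestures at this but softens it to ``persists by continuity,'' which is not quite right --- when some $\gamma_j$ vanishes the affected branches have a strictly higher-order leading term and the claim as stated fails for them. Making the nonsingularity hypothesis explicit would close this gap.
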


Lidskii's result can be interpreted in terms of the $\epsilon$-pseudospectrum of a matrix $A$ in order to understand the radii of $\sigma_\epsilon(A)$ as $\epsilon \to 0$.
\begin{thm}

Let $A \in \mathbb{C}^{N \times N}$. Let $\epsilon > 0$. Given $\lambda \in \sigma(A)$, for $\epsilon$ small enough, there exists a connected component $U \subseteq \sigma_\epsilon (A)$ such that $U \cap \sigma(A) = \lambda$; denote this component of the $\epsilon$-pseudospectrum  $\sigma_\epsilon(A) \restriction_\lambda $.

Then, as $\epsilon \to 0$,
\[ B(\lambda, (C\epsilon)^{1/n} + o(\epsilon^{1/n})) \subseteq \sigma_\epsilon(A) \restriction_\lambda \subseteq B(\lambda, (C\epsilon)^{1/n} + o(\epsilon^{1/n})) \]
where $C = \| XY \| $, with $X,Y$ defined above, and $n$ is the size of the largest Jordan block corresponding to $\lambda$.

\label{thm: AUDiT}
\end{thm}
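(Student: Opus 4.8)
The plan is to read off both inclusions from Lidskii's expansion (Theorem~\ref{thm: Lidskii}) via the perturbation form of the pseudospectrum: by the second definition, $z\in\sigma_\epsilon(A)$ precisely when $z\in\sigma(A+E)$ for some $E$ with $\|E\|<\epsilon$. Fix $\epsilon$ small enough that $\sigma_\epsilon(A)\restriction_\lambda$ is defined and sits inside a neighborhood of $\lambda$ containing no other eigenvalue of $A$; since every connected component of $\sigma_\epsilon(A)$ contains an eigenvalue, this is the \emph{only} piece of $\sigma_\epsilon(A)$ near $\lambda$, and by continuity of the spectrum, for $\|E\|$ small the eigenvalues of $A+E$ lying near $\lambda$ are exactly those bifurcating from $\lambda$. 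I would first record the algebraic identity $\sup_{\|E\|\le1}\rho(YEX)=\|XY\|=C$, where $\rho$ is the spectral radius: ``$\le$'' holds because the nonzero eigenvalues of $YEX$ coincide with those of $EXY$, so $\rho(YEX)=\rho(EXY)\le\|E\|\,\|XY\|$; and ``$\ge$'' holds by taking $E_0=VU^{*}$ for a singular value decomposition $XY=U\Sigma V^{*}$, since then $\|E_0\|=1$ and $E_0XY=V\Sigma V^{*}$ is positive semidefinite with spectral radius $\sigma_{\max}(XY)=\|XY\|$. In particular $YE_0X$ has an eigenvalue $\gamma_0$ with $|\gamma_0|=C$.

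\textbf{Upper bound.} Given $z\in\sigma_\epsilon(A)\restriction_\lambda$, choose $E$ with $\|E\|=t<\epsilon$ and $z\in\sigma(A+E)$, and write $E=tE''$ with $\|E''\|=1$. Then $z$ is an eigenvalue of $A+tE''$ near $\lambda$. If it comes from one of the maximal Jordan blocks of $\lambda$, Theorem~\ref{thm: Lidskii} gives $|z-\lambda|\le(Ct)^{1/n}+o(t^{1/n})\le(C\epsilon)^{1/n}+o(\epsilon^{1/n})$, using $\rho(YE''X)\le\|E''\|\,\|XY\|=C$ and $t<\epsilon$; if it comes from a Jordan block of size $n'<n$, it is displaced by only $O(\epsilon^{1/n'})=o(\epsilon^{1/n})$. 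Hence $\sigma_\epsilon(A)\restriction_\lambda\subseteq B(\lambda,(C\epsilon)^{1/n}+o(\epsilon^{1/n}))$. (This inclusion is also essentially the ``Asymptotic formula for the resolvent norm'' theorem quoted above, once one observes that its constant $\|V_jT^{n-1}U_j^{*}\|$ equals $\|\sum_{k=1}^{\ell}x_ky_k\|=\|XY\|$.)

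\textbf{Lower bound.} Let $z=\lambda+re^{i\alpha}$ with $0<r<(C\epsilon)^{1/n}$, and put $t=r^{n}/C\in(0,\epsilon)$. Apply Theorem~\ref{thm: Lidskii} to $A+t\,e^{i\phi}E_0$, which has norm $t<\epsilon$: among its eigenvalues near $\lambda$ are $\lambda+(e^{i\phi}\gamma_0t)^{1/n}+o(t^{1/n})=\lambda+(Ct)^{1/n}\zeta+o(\epsilon^{1/n})$, where $\zeta$ runs over the $n$ distinct $n$-th roots of $e^{i\phi}\gamma_0/C$. As $\phi$ varies over $[0,2\pi)$ and the root is varied, the $\zeta$ sweep the whole unit circle, so for a suitable choice one of these eigenvalues equals $\lambda+re^{i\alpha}+o(\epsilon^{1/n})=z+o(\epsilon^{1/n})$; it lies in $\sigma_\epsilon(A)$ and, being $o(1)$-close to $\lambda$, in $\sigma_\epsilon(A)\restriction_\lambda$. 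Letting $r$ (equivalently $t$) vary, these eigenvalues exhaust $B(\lambda,(C\epsilon)^{1/n})\setminus\{\lambda\}$ up to an $o(\epsilon^{1/n})$ error, and a short continuity argument — the relevant eigenvalues depend continuously on $\phi$ and $t$, and $\sigma_\epsilon(A)$ is open — promotes this to $B(\lambda,(C\epsilon)^{1/n}+o(\epsilon^{1/n}))\subseteq\sigma_\epsilon(A)\restriction_\lambda$.

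\textbf{Main obstacle.} The crux is uniformity of the error in Lidskii's theorem. Theorem~\ref{thm: Lidskii} gives $o(\epsilon^{1/n})$ for a fixed perturbation direction, but above the direction $E''$ ranges over the compact unit sphere of matrices and the strength over $(0,\epsilon)$, and in the lower bound one also needs the expansion to vary continuously with $\phi$ and $t$ so that the perturbed eigenvalues genuinely cover a full disk. I would address this either by a compactness argument or by invoking the Newton-polygon proof of Lidskii's theorem, whose remainder estimates depend only on $A$ and on $\|E\|$; the remaining bookkeeping (that all perturbed eigenvalues within $o(1)$ of $\lambda$ lie in one component of $\sigma_\epsilon(A)$, and that displacements from smaller blocks are genuinely $o(\epsilon^{1/n})$) is routine given the properties of $\sigma_\epsilon(A)$ recalled in Section~\ref{sec: pseudospectra}.
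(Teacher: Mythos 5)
Your overall strategy lines up with the paper's: Lidskii's theorem drives the lower bound, and your parenthetical upper-bound route is exactly the paper's, which invokes the proof of Trefethen--Embree Theorem~52.3 to get $\sigma_\epsilon(A)\restriction_\lambda\subseteq B(\lambda,(\beta\epsilon)^{1/n}+o(\epsilon^{1/n}))$ with $\beta=\|PD^{n-1}Q\|$, and then identifies $\beta=\|XY\|$ by computing that $PD^{n-1}Q$ is $XY$ padded with zero blocks. The genuine difference is that you prove the key identity $\sup_{\|E\|\le1}\rho(YEX)=\|XY\|$ from scratch, via $\rho(YEX)=\rho(EXY)\le\|E\|\,\|XY\|$ and the SVD-based extremizer $E_0=VU^*$, whereas the paper cites Moro--Burke--Overton Theorem~4.2 for this. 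Your derivation is short and self-contained and would be a welcome addition.

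Your first-choice upper-bound argument, however, has the gap you flag, and your proposed fix is not quite right. Applying Lidskii to the $z$-dependent direction $E''$ requires the $o(t^{1/n})$ remainder to be uniform over the unit sphere of directions, and it is not true that the remainder ``depends only on $A$ and $\|E\|$'': for directions where $YE''X$ degenerates (e.g.\ is nilpotent or zero), the Puiseux exponents of the perturbed eigenvalues change, so the size and even the power of the correction term is direction-dependent. The resolvent-norm route you mention parenthetically sidesteps this entirely, because Theorem~52.3 is an estimate in $z$ with no perturbation direction in sight; that is why the paper takes it. For the lower bound the uniformity worry is much milder and, in fact, can be dismissed: you perturb only along the one-complex-parameter family $A+\tilde\epsilon E_0$ with $\tilde\epsilon=\epsilon e^{i\phi}$, and the Puiseux expansion in the single complex parameter $\tilde\epsilon$ converges on a disk, so the error is automatically uniform in $\phi$; the paper, like you, leaves this implicit. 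My recommendation: keep your proof of the identity and your lower-bound sweep, drop the pointwise-Lidskii upper bound, and present the Theorem~52.3 computation as the upper bound.
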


\begin{proof}

\textbf{Lower Bound:} 
Give $E \in \mathbb{C}^{N \times N}$, let $\gamma_{\max}(E)$ be the largest eigenvalue of $YEX$. It is shown \cite[Theorem 4.2]{moro1997lidskii} that
\[ \alpha := \max_{\| E \| \leq 1} \gamma_{\max}(E) = \| XY \|, \]
Moreover, the $E$ that maximizes $\gamma$ is given by $E = vu$ where $v$ and $u$ are the right and left singular vectors of the largest singular value of $XY$, normalized so $\| v \| = \| u \| = 1$.
We claim that $B(\lambda, (|\alpha| \epsilon)^{1/n} + o(\epsilon^{1/n}) ) \subseteq \sigma_\epsilon(A) \restriction_\lambda$.

Fix $E = vu$, with $v, u$ defined above, fix $\theta \in [0, 2 n \pi]$, and define $\tilde{E} = e^{i \theta} E$. 
Note that $\gamma$ is an eigenvalue of $Y E X$ iff $e^{i \theta} \gamma$ is an eigenvalue of $Y\tilde{E}X$.
Since $\alpha$ is an eigenvalue of $E$, then $e^{i \theta} \alpha$ is an eigenvalue of $Y\tilde{E}X$.
Considering the perturbed matrix $A + \epsilon \tilde{E}$, theorem \ref{thm: Lidskii} implies that there is a perturbed eigenvalue $\lambda(\epsilon)$ of the form
\[ \lambda(\epsilon) = \lambda + (e^{i \theta} \alpha \epsilon)^{1/n} + o(\epsilon^{1/n}) \]
and thus $\lambda(\epsilon) \in \overline{\sigma_\epsilon(A) \restriction_\lambda} $.
Ranging $\theta$ from $0$ to $2 n \pi$, we get the desired result.

\textbf{Upper Bound:} 
Using the proof of \cite[Theorem 52.3]{trefethen2005spectra}, we know that asymptotically
\[ \sigma_\epsilon(A) \restriction_\lambda \subseteq B(\lambda, (\beta \epsilon)^{1/n} + o(\epsilon^{1/n})), \]
where $\beta = \| PD^{n-1} Q \| $ and $J= \lambda I +D$.
We claim $\beta = \| X Y \| = \alpha$.

Note that $D^{n-1} = \text{diag}[\Gamma_1, \ldots \Gamma_\ell, 0]$ where $\Gamma_k$ is a $n \times n$ matrix with a 1 in the top right entry and zeros elsewhere.
We find
\setcounter{MaxMatrixCols}{20}
\[ PD^{n-1} =
\begin{pmatrix}
&	&	&	&	&	& \\
&	&	&	&	&	& \\
&	& x_1	&	& x_2	& 	& \cdots & 	&  x_\ell & & &\\
&	&	&	&	&	& \\
&	&	&	&	&	&
\end{pmatrix}.
\]
This then gives
\[ PD^{n-1} Q =
\begin{pmatrix}
XY & 0 \\
0 & 0
\end{pmatrix}.
\]
Thus $\beta = \| PD^{n-1} Q \|  = \| XY \| = \alpha$.
\end{proof}

We present special cases of matrices to explore the consequences of Theorem \ref{thm: AUDiT}.

{\bf Special Cases:}

\begin{enumerate}
\item $\lambda$ is simple. 

Then, $n=1$ and $X$ and $Y$ become the right and left eigenvectors $x$ and $y^\ast$ for $\lambda$, respectively.
Hence, $C = \| XY \| = \| x y^\ast \| = \|x\| \|y\| = \kappa(\lambda)$ where we normalize so that $| y^\ast x | = 1$.
Then, Theorem~\ref{thm: AUDiT} becomes
\[ \sigma_\epsilon(A) \restriction_\lambda \approx B(\lambda, \kappa(\lambda) \epsilon ) \]
which matches with Theorem~\ref{thm: distinctBF}.

\item $\lambda$ has geometric multiplicity 1.

In this case we obtain the same result for when $\lambda$ is simple, except $n$ may not equal 1. In other words,
\[ \sigma_\epsilon(A) \restriction_\lambda \approx B(\lambda, (\kappa(\lambda) \epsilon)^{1/n} ). \]

\item $A \in \mathbb{C}^{2\times2}$.

There are two cases, as in Section \ref{sec: 2x2}:
\newline
First, assume $A$ is non-diagonalizable. In this case, $A$ only has one eigenvalue, $\lambda$. Writing $A=VJV\inv$, where $V$ and $J$ are as defined in equation \eqref{eq: 22ndsetup}, we have that,
\[X=\begin{pmatrix} a&c \end{pmatrix}^T,  \qquad Y= \frac{1}{ad-bc}\begin{pmatrix} -c &  a \end{pmatrix}.\]
From Theorem~\ref{thm: AUDiT}, we then have that as $\epsilon \to 0$,
\[ \sigma_\epsilon(A) \approx B\left( \lambda, \left(\frac{|a|^2+|c|^2}{|ad-bc|}\epsilon \right)^{1/2} + o\left(\epsilon^{1/2}\right)\right). \]
This agrees asymptotically with equation~\ref{eq: 2x2nondiag}; however \ref{eq: 2x2nondiag} gives an explicit formula for $\sigma_\epsilon(A)$.

In the case where $A$ is diagonalizable, $A$ has two eigenvalues, $\lambda_1$ and $\lambda_2$. Again, we write $A=VDV\inv$ where $V$ and $D$ are as defined in equation \eqref{eq: 22dsetup}. From this, we have
\[ \|XY\|=\frac{\left(|a|^2+|c|^2\right)\left(|b|^2+|d|^2\right)}{|ad-bc|}=\csc \theta. \]

Thus, as $\epsilon \to 0$, we have from Theorem~\ref{thm: AUDiT}:
\[B\left( \lambda, \left(\csc \theta \right) \epsilon  + o\left(\epsilon\right)\right) \subseteq \sigma_\epsilon(A) \subseteq B\left( \lambda, \left(\csc \theta \right) \epsilon  + o\left(\epsilon\right)\right). \]

So,
\[ \frac{r_\text{max}}{r_\text{min}} = \frac{\left(\csc \theta \right) \epsilon  + o\left(\epsilon\right)}{\left(\csc \theta \right) \epsilon  + o\left(\epsilon\right)} = 1 + o(1) \]

This agrees with the ratio we obtain from the explicit formula for diagonalizable $2\times 2$ matrices; however, equation \eqref{ratio} gives us more information on the $o(1)$ term.

\item $A$ is a Jordan block.

From \cite[pg. 470]{trefethen2005spectra}, we know that the $\epsilon$-pseudospectrum of the Jordan block is exactly a disk about the eigenvalue of $J$ of some radius. An explicit formula for the radius remains unknown, however we can use Theorem \ref{thm: AUDiT} to find the asymptotic behavior.

\begin{prop}[Asymptotic Bound]  Let $J$ be an $N \times N$ Jordan block. Then \[\sigma_\epsilon(J) = B(\lambda, \epsilon^{1/N}+ o(\epsilon^{1/N})). \label{JordanAsymptotic} \]

\begin{proof} The $N \times N$ Jordan block has left and right eigenvectors $u_j$ and $v_j$ where $\|u_j\|=1$ and $\|v_j\|=1$. So, from Theorem \ref{thm: AUDiT}, we find $C=\|XY\|=\|v_ju_j\|=1$. Thus, \[\sigma_\epsilon(J) \approx B(\lambda, \epsilon^{1/N}+o(\epsilon^{1/N})). \]

\end{proof}
\end{prop}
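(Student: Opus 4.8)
The statement is essentially a corollary of Theorem~\ref{thm: AUDiT}: the entire content is to identify, for $A = J$, the matrices $X$ and $Y$ and the integer $n$ appearing there, and to check that the resulting constant $C = \|XY\|$ equals $1$. So the plan is first to unwind the Jordan-decomposition set-up preceding Theorem~\ref{thm: AUDiT} in this special case, and then to plug in.

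For an $N\times N$ Jordan block $J$ with eigenvalue $\lambda$, the Jordan decomposition is trivial: one may take $P = Q = I$, and there is a single Jordan block for the single eigenvalue $\lambda$, so in the earlier notation $m = \ell = 1$ and $n = N$. Then $P_1 = Q_1 = I$, the right eigenvector $x_1$ is the first column $e_1$ of $P_1$, and the left eigenvector $y_1$ is the last row $e_N^{T}$ of $Q_1$; hence $X = e_1$ and $Y = e_N^{T}$. Next I would compute $XY = e_1 e_N^{T}$, the $N\times N$ matrix with a single $1$ in the top-right corner, which is a rank-one partial isometry with $\|XY\| = \|e_1\|\,\|e_N\| = 1$. (This also matches the identification $PD^{N-1}Q = e_1 e_N^{T}$, with $D = J - \lambda I$, used in the upper-bound half of the proof of Theorem~\ref{thm: AUDiT}.)

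With $C = \|XY\| = 1$ and $n = N$, Theorem~\ref{thm: AUDiT} applied to the eigenvalue $\lambda$ gives
\[ B\!\left(\lambda,\, \epsilon^{1/N} + o(\epsilon^{1/N})\right) \subseteq \sigma_\epsilon(J)\restriction_\lambda \subseteq B\!\left(\lambda,\, \epsilon^{1/N} + o(\epsilon^{1/N})\right). \]
Finally, since $\lambda$ is the only eigenvalue of $J$, for $\epsilon$ small enough $\sigma_\epsilon(J)$ has a single connected component, so $\sigma_\epsilon(J)\restriction_\lambda = \sigma_\epsilon(J)$ and the claimed equality follows. I do not expect a genuine obstacle here — this is a corollary-style application — but the one place to be careful is the bookkeeping in the Jordan set-up: checking which of $e_1, e_N$ is the right versus the left eigenvector, and noting that the normalization ambiguity in Theorem~\ref{thm: AUDiT} (and the $\kappa(\lambda)$-type normalization $|y^{*}x| = 1$, which here would be vacuous since $e_N^{T}e_1 = 0$) is simply absent because the decomposition can be taken with $P = Q = I$.
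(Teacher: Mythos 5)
Your proposal is correct and takes essentially the same route as the paper: both identify the relevant data in Theorem~\ref{thm: AUDiT} for the single Jordan block ($\ell = 1$, $n = N$), observe that $X$ and $Y$ are the right and left eigenvectors (which you pin down explicitly as $e_1$ and $e_N^{T}$ with $P = Q = I$), compute $\|XY\| = \|e_1 e_N^{T}\| = 1$ as the norm of a rank-one outer product, and then read off the asymptotic radius. Your version is more careful than the paper's one-line argument — in particular your remark that the Jordan set-up here requires only $QP = I$ (so $P = Q = I$ is allowed) and does not impose a $|y^{*}x| = 1$ normalization (which would be impossible since $e_N^{T}e_1 = 0$) is a genuine clarification, as is the closing observation that the single-eigenvalue hypothesis lets you replace $\sigma_\epsilon(J)\restriction_\lambda$ by $\sigma_\epsilon(J)$ itself.
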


By a simple computation, we can also get a better explicit lower bound on the $\epsilon$-pseudospectra of an $N \times N$ Jordan block, that agrees with our asymptotic bound.

\begin{prop} Let J be an $ N \times N$ Jordan block. 
 Then,
\[ B \left(\lambda, \sqrt[N]{\epsilon(1+\epsilon)^{N-1}}\right) \subseteq \sigma_\epsilon(J). \]
\label{jordanblock}
\end{prop}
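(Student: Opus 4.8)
The plan is to prove this via the perturbation characterisation of the pseudospectrum (Definition~2): fix $z$ with $0 < |z-\lambda| < \sqrt[N]{\epsilon(1+\epsilon)^{N-1}}$ (the case $z=\lambda$ is trivial, since $\lambda \in \sigma(J) \subseteq \sigma_\epsilon(J)$), and exhibit an explicit $E$ with $\|E\| < \epsilon$ and $z \in \sigma(J+E)$. Write $J = \lambda I + S$, where $S$ is the nilpotent shift carrying $1$'s on the superdiagonal, and look for a perturbation of the form
\[ E = t\,S + \delta\, e_N e_1^{*}, \]
that is, add a small number $t$ to each of the $N-1$ superdiagonal entries and a small number $\delta$ to the bottom-left corner. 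The reason for this shape is that $J+E = \lambda I + C$ with $C = (1+t)S + \delta\, e_N e_1^{*}$ a weighted cyclic shift, whose spectrum is transparent, while the two summands of $E$ do not interfere in operator norm; the corner perturbation $\delta e_N e_1^{*}$ alone only gives the cruder ball $B(\lambda,\epsilon^{1/N})$, and enlarging the superdiagonal is what amplifies the cyclic product by the factor $(1+t)^{N-1}$ at no extra cost in norm.

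The first computation is $\|E\|$. Since the last row of $S$ is zero and $S e_1 = 0$, we have $e_N^{*}S = 0$ and $S^{*}e_N = 0$, so the cross terms $t\delta\, S^{*}e_N e_1^{*}$ and $t\bar\delta\, e_1 e_N^{*}S$ in $E^{*}E = t^2 S^{*}S + t\delta\, S^{*}e_N e_1^{*} + t\bar\delta\, e_1 e_N^{*}S + |\delta|^2 e_1 e_1^{*}$ vanish; using $S^{*}S = I - e_1 e_1^{*}$ this leaves $E^{*}E = t^2(I - e_1 e_1^{*}) + |\delta|^2 e_1 e_1^{*} = \operatorname{diag}(|\delta|^2, t^2, \ldots, t^2)$, hence $\|E\| = \max(t,|\delta|)$. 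The second computation is $\sigma(J+E)$. Tracking $C$ on the standard basis ($C e_1 = \delta e_N$ and $C e_j = (1+t) e_{j-1}$ for $j \geq 2$) shows that $C^N = (1+t)^{N-1}\delta\, I$, so every eigenvalue $\mu$ of $C$ satisfies $\mu^N = (1+t)^{N-1}\delta$; hence the characteristic polynomial of $J+E$ is $\det\big((w-\lambda)I - C\big) = (w-\lambda)^N - (1+t)^{N-1}\delta$, so $\sigma(J+E) = \{\lambda + w : w^N = (1+t)^{N-1}\delta\}$.

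To conclude, set $\delta = (z-\lambda)^N/(1+t)^{N-1}$, which makes $z$ a root of the characteristic polynomial above for every $t$, with $|\delta| = |z-\lambda|^N/(1+t)^{N-1}$. Because $|z-\lambda|^N < \epsilon(1+\epsilon)^{N-1}$ and $(1+t)^{N-1} \to (1+\epsilon)^{N-1}$ as $t \to \epsilon^{-}$, we may fix $t \in (0,\epsilon)$ close enough to $\epsilon$ that $|z-\lambda|^N < \epsilon(1+t)^{N-1}$, i.e. $|\delta| < \epsilon$; then $\|E\| = \max(t,|\delta|) < \epsilon$ and $z \in \sigma(J+E) \subseteq \sigma_\epsilon(J)$. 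As $z$ was an arbitrary point of the open ball, $B(\lambda, \sqrt[N]{\epsilon(1+\epsilon)^{N-1}}) \subseteq \sigma_\epsilon(J)$. The only genuinely nonroutine step is spotting this two-part perturbation and verifying $\|E\| = \max(t,|\delta|)$ (the cancellation of the cross terms); the two determinant/norm computations and the final adjustment of $t$ are straightforward.
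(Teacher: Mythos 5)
Your proof is correct and rests on the same idea as the paper's: perturb the superdiagonal and the bottom-left corner of $J$ so that $J+E$ becomes a shift of $\lambda I$ by a weighted cyclic matrix, whose characteristic polynomial is $(w-\lambda)^N$ minus the product of the cyclic weights. The paper uses a single parameter $k$ in both places, i.e.\ $E = k\,S + k\,e_N e_1^*$, so that $\|E\| = |k|$ (here $S + e_N e_1^*$ is the unitary cyclic shift) and the perturbed eigenvalues are $\lambda + \sqrt[N]{k(1+k)^{N-1}}$. You instead decouple the two knobs into $t$ on the superdiagonal and $\delta$ in the corner, verify $\|E\| = \max(t,|\delta|)$ by the same cross-term cancellation, and for a target $z$ solve for $\delta$ exactly and then push $t$ toward $\epsilon$. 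What this buys you is that every point of the open ball $B\bigl(\lambda,\sqrt[N]{\epsilon(1+\epsilon)^{N-1}}\bigr)$ is exhibited directly as an eigenvalue of an admissible $J+E$. With the paper's coupled choice, the perturbed eigenvalues trace out only the $N$th-root lift of the image of $\{|k|<\epsilon\}$ under $k\mapsto k(1+k)^{N-1}$, which is an open set but not a disk; so the paper's concluding ``hence $B(\lambda,\sqrt[N]{\epsilon(1+\epsilon)^{N-1}}) \subseteq \sigma_\epsilon(J)$'' implicitly leans on the separately cited fact that $\sigma_\epsilon(J)$ is a perfect disk (whereupon it suffices to approach the maximal radius along the positive real axis). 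Your two-parameter version sidesteps that dependence entirely, a modest but genuine gain in self-containment.
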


\begin{proof}
We use the second definition for $\sigma_\epsilon(J)$. Let
\[ E=\begin{pmatrix} 
 0& k &  &  &  \\ 
&  0& k &  &  \\
 &  & \ddots & \ddots &  \\
 &  &  &  \ddots& k \\
k & &  &  &  0\end{pmatrix} \]
where $|k| < \epsilon$, and note that $\|E\|<\epsilon$. 
We take $\det(J+E-z I)$ and set it equal to zero to find the eigenvalues of $J+E$. 
\begin{align*} 0 &= \det(J+E-z I) \\
&= \det \begin{pmatrix} 
\lambda-z & k+1 &  &  & \\ 
& \lambda-z & k+1 &  &  \\
&  & \ddots &  \ddots&  \\
&  &  & \ddots & k+1 \\
k &  &  &  & \lambda-z
 \end{pmatrix} \\
 &= (\lambda-z)^N + (-1)^{N-1}k(1+k)^{N-1} \\
 &= (-1)^{N-1}((z-\lambda)^N+k(1+k)^{N-1}); \\
\iff (z-\lambda)^N &= k(1+k)^{N-1}\\
 z-\lambda &=\sqrt[N]{k(1+k)^{N-1}}. 
 \end{align*}
 
 So, $B \left(\lambda, \sqrt[N]{\epsilon(1+\epsilon)^{N-1}}\right) \subseteq \sigma_\epsilon(J)$.
 \end{proof}

\end{enumerate}

\section{Pseudospectra of bidiagonal matrices} \label{sec: bidiag}

In this section we consider bidiagonal matrices, a class of matrices with important applications in spectral theory and mathematical physics. We investigate the pseudospectra of periodic bidiagonal matrices and show that the powers $n$ and the coefficients $C$ in Theorem \ref{thm: AUDiT} can be computed explicitly. 
We consider the coefficients $\{ a_k \}_{k = 1}^{N}$ and $\{ b_k \}_{k = 1}^{N-1}$ which define the bidiagonal matrix 
\[
A = \textrm{bidiag} \left(\{ a_k \}_{k = 1}^{N}, \{ b_k \}_{k = 1}^{N-1}\right) = \left(
                                                                         \begin{array}{cccccc}
                                                                           a_1 & b_1 &  &  &  &  \\
                                                                            & a_2 & b_2 &  &  &  \\
                                                                            &  & \ddots & \ddots &  &  \\
                                                                            &  &  & \ddots & \ddots &  \\
                                                                            &  &  &  & a_{N-1} & b_{N-1} \\
                                                                            &  &  &  &  & a_N \\
                                                                         \end{array}
                                                                       \right)
\]

Note that if $b_i = 0$ for some $i$, then the matrix $A$ ``decouples" into the direct sum $A = \textrm{bidiag} \left(\{ a_k \}_{k = 1}^{i}, \{ b_k \}_{k = 1}^{i-1}\right) \oplus \textrm{bidiag} \left(\{ a_k \}_{k = i+1}^{N}, \{ b_k \}_{k = i+1}^{N-1}\right)$ and by Theorem \ref{directsum} the pseudospectrum of $A$ is the union of pseudospectra of smaller bidiagonal matrices. Therefore we can assume, without loss of generality, that $b_i \neq 0$ for any $i \in \{ 1, 2, \ldots, N - 1\}$.

Note also that the eigenvalues of $A$ are $\{ a_1, a_2, \ldots, a_n \}$ and some eigenvalues may be repeated in the list. In order to apply Theorem \ref{thm: AUDiT} we have to  find the dimension of the largest Jordan block associated to each eigenvalue of the matrix $A$. The following proposition addresses this question:

\begin{prop}
Let $A = \mathrm{bidiag} \left(\{ a_k \}_{k = 1}^{N}, \{ b_k \}_{k = 1}^{N-1}\right)$ with $b_i \neq 0$ for any $i$ and suppose that $a$ is an eigenvalue of $A$. Then $\dim N(A - a I) = 1$, where $N(A - a I)$ is the eigenspace corresponding to the eigenvalue $a$ of the matrix $A$.
\label{geometricmultiplicity}
\end{prop}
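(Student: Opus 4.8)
The plan is to analyze the kernel of $A - aI$ directly, exploiting the bidiagonal structure. Write $M = A - aI$; this is again bidiagonal, with diagonal entries $a_k - a$ and the same off-diagonal entries $b_1, \ldots, b_{N-1}$, all of which are nonzero by hypothesis. Suppose $v = (v_1, \ldots, v_N)^T$ lies in $N(M)$, so that $Mv = 0$. Reading off the rows of $Mv = 0$ from the bottom up gives the system of equations
\[ (a_k - a) v_k + b_k v_{k+1} = 0 \quad (1 \le k \le N-1), \qquad (a_N - a) v_N = 0. \]

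First I would show that once $v_N$ is fixed, the entire vector $v$ is determined. The key observation is to run the recursion from the \emph{top} instead: the first equation $(a_1 - a)v_1 + b_1 v_2 = 0$ gives $v_2 = -\frac{(a_1 - a)}{b_1} v_1$, since $b_1 \neq 0$; inductively the $k$-th equation gives $v_{k+1} = -\frac{(a_k - a)}{b_k} v_k$, so every $v_{k+1}$ is a scalar multiple of $v_k$, hence of $v_1$. Therefore $v$ is completely determined by its first coordinate $v_1$, which shows $\dim N(M) \le 1$. Since $a$ is assumed to be an eigenvalue, $N(M) \neq \{0\}$, so $\dim N(M) = 1$ exactly. (Equivalently, one notes that $M$ has rank at least $N-1$ because its first $N-1$ rows, restricted to columns $2, \ldots, N$, form an upper-triangular matrix with nonzero diagonal $b_1, \ldots, b_{N-1}$, hence are linearly independent; so $\dim N(M) = N - \operatorname{rank}(M) \le 1$.)

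This argument is essentially a short rank computation, so there is no serious obstacle; the only point requiring a little care is making sure the recursion is set up in the direction that actually uses $b_i \neq 0$ (running it from the bottom would instead require the $a_k - a$ to be nonzero, which fails precisely at the eigenvalue). I would state the rank-based version as the clean one-line proof and perhaps mention the explicit recursion as the reason it works. Note this proposition does not by itself pin down the \emph{size} of the largest Jordan block for $a$ — it says the geometric multiplicity is $1$, so there is exactly one Jordan block for $a$, of size equal to the algebraic multiplicity (the number of indices $k$ with $a_k = a$); that refinement, needed to apply Theorem \ref{thm: AUDiT}, would come in a subsequent step.
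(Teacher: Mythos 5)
Your proof is correct and takes essentially the same approach as the paper: both establish that $\operatorname{rank}(A-aI) = N-1$ by exhibiting $N-1$ linearly independent rows (yours) or columns (the paper's), exploiting the nonvanishing superdiagonal. Your kernel recursion $v_{k+1} = -\tfrac{a_k - a}{b_k}v_k$ is just the explicit form of that same observation, and your remark about running the recursion downward rather than upward is exactly the right point of care; the only slip is cosmetic --- the $(N-1)\times(N-1)$ submatrix you exhibit (rows $1,\dots,N-1$, columns $2,\dots,N$) is lower-triangular with diagonal $b_1,\dots,b_{N-1}$, not upper-triangular, though this does not affect the argument.
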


\begin{proof} 

Suppose $a = a_{i_1} = a_{i_2} = \cdots = a_{i_m}$ where $1 \leq i_1 < i_2 < \cdots < i_m \leq N$ and $a \neq a_k$ for any $k \in \{ 1, 2, \ldots, n\} \setminus \{i_1, i_2, \ldots, i_m\}$.
We have
\[
A - a I = \left(
                                                                                              \begin{array}{cccccccc}
                                                                                                a_1 - a & b_1 &  &  &  &  &  &  \\
                                                                                                 & a_2 - a & \ddots &  &  &  &  &  \\
                                                                                                 &  & \ddots & b_{i_1 - 2} &  &  &  &  \\
                                                                                                 &  &  & a_{i_1 - 1} - a & b_{i_1 - 1} &  &  &  \\
                                                                                                 &  &  &  & 0 & b_{i_1} &  &  \\
                                                                                                 &  &  &  &  & a_{i_1 + 1} - a & \ddots &  \\
                                                                                                 &  &  &  &  &  & \ddots & b_{N-1} \\
                                                                                                 &  &  &  &  &  &  & a_N - a \\
                                                                                              \end{array}
                                                                                            \right)
\]
Let us denote by $\{ \textbf{c}_1, \textbf{c}_2, \ldots, \textbf{c}_N \}$ the columns of $A - a I$ and by $\{ \textbf{e}_1, \textbf{e}_2, \ldots, \textbf{e}_N \}$ the standard canonical basis in $\R^N$. Since $b_k \neq 0$ for any $k \in \{ 1, 2, \ldots, N - 1 \}$ we obtain that columns $\{ \textbf{c}_2, \textbf{c}_3, \ldots, \textbf{c}_{N} \}$ are linearly independent. Moreover, we also have $\textrm{Span} ( \textbf{c}_2, \textbf{c}_3, \ldots, \textbf{c}_{i_1} ) = \textrm{Span} ( \textbf{e}_1, \textbf{e}_2, \ldots, \textbf{e}_{i_1 - 1} )$, which in turn implies that $\textbf{c}_1 \in \textrm{Span} \{ \textbf{c}_2, \textbf{c}_3, \ldots, \textbf{c}_{i_1} \}$.
We conclude that the rank of the matrix $A - a I$ is $N - 1$, hence $\dim N (A - a I) = 1$. 
\end{proof}

The previous proposition implies that, under the assumption $b_i \neq 0$ for any $i$, if $a$ is an eigenvalue of the matrix $A = \textrm{bidiag} \left(\{ a_k \}_{k = 1}^{N}, \{ b_k \}_{k = 1}^{N-1}\right)$ of algebraic multiplicity $m$, then there is only one Jordan block associated to the eigenvalue $a$.

We now consider the special case of periodic bidiagonal matrices.
Let $A$ be an $N \times N$ matrix with period $k$ on the main diagonal and nonzero superdiagonal entries
\[ A =
\begin{pmatrix}
a_1 	& b_1 	&  		&  		&  		&  		&  		&    		\\
 	& \ddots 	& \ddots	& 		&  		&  		&  		&    		\\
	& 		&  a_k	& \ddots	&  		&  		&  		&    		\\
	& 		& 		& \ddots	&  \ddots	&  		&  		&    		\\
 	& 		& 		& 		&  \ddots	&  \ddots	&  		&    		\\
 	&  		&  		&  		& 	 	&  a_1	& \ddots	&  		\\
 	&  		&  		&  		& 	 	&  		& \ddots	& b_{N-1}	\\
 	&  		&  		&  		& 	 	&  		& 		& a_r 	\\	
\end{pmatrix}
\]
We have from Theorem~\ref{thm: AUDiT} and Proposition \ref{geometricmultiplicity} that
\[\sigma_\epsilon(A) \approx \bigcup_{j=1}^{k} B\left(a_j, (C_j \epsilon)^{\frac{1}{n_j}}\right),\]
where $n_j$ is the size of the Jordan block corresponding to $a_j$ and also the number of times $a_j$ appears on the main diagonal. Moreover, the constant $C_j$ that multiplies any eigenvalue $a_j$ is simply $C_j=\|v_j\| \|u_j\|$, where $v_j$ and $u_j$ denote the right and left eigenvectors, respectively. We will give the explicit expressions for $v_j$ and $u_j$. 

We will begin by introducing $\epsilon$-pseudospectrum for simple special cases which lead to the most general case.

The cases will be presented as follows:

\begin{enumerate}
\item Let $A$ be a $kn \times kn$ matrix with $a_1, \ldots, a_k$ distinct.
\item Let $A$ be an $N \times N$ matrix with $a_1, \ldots, a_k$ distinct.
\item \textbf{General Case:} Let $A$ be an $N \times N$ matrix with $a_1, \ldots, a_k$ not distinct.

\end{enumerate}

To shorten notation for the rest of this section, we define
\[f(x)=
\begin{cases}
x , \quad x\neq 0\\
1  , \quad x=0
\end{cases}.
\]

\textbf{Case 1:}
\begin{itemize}
\item
The size of $A$ is $kn \times kn$.
\item
The $a_i$'s are distinct.
\end{itemize}

We write the elements of the superdiagonal as $b_1, b_2, \ldots , b_{N-1}$. Let $p=k(n-1)+j$. 

We have that:
\[ v_j =
\begin{pmatrix}
\frac{b_1 \cdots b_{j-1}}{f(a_j-a_1) \cdots f(a_j-a_{j-1})} \\[0.5em]
\frac{b_2 \cdots b_{j-1}}{f(a_j-a_2) \cdots f(a_j-a_{j-1})} \\[0.5em]
\vdots \\[0.5em]
\frac{b_{j-1}}{f(a_j-a_{j-1})} \\[0.5em]
1 \\
0 \\
\vdots \\
0
\end{pmatrix}, \qquad u_j^\ast = \left(\frac{1}{(f(a_1-a_j)\cdots f(a_k-a_j))^{n-1}}\right)
\begin{pmatrix}
0\\ \vdots \\ 0 \\ 1 \\[0.5em] \frac{b_j\cdots b_{p} }{f(a_{j+1}-a_j) }  \\[0.5em]  \vdots \\[0.5em]  \frac{b_{j} \cdots b_{N-2}}{f(a_{j+1}-a_j) \cdots f(a_{k-1}-a_j)}  \\[0.5em]  \frac{b_{j} \cdots b_{N-1}}{f(a_{j+1}-a_j) \cdots f(a_k-a_j)} 
\end{pmatrix}^T.
\]

Direct computation will show that these are indeed left and right eigenvectors associated with any eigenvalue $a_j$.

\textbf{Case 2:}

\begin{itemize}
\item
The size of $A$ is $N \times N$.
\item
The $a_i$'s are distinct.
\end{itemize}
We relax our assumption that the size of our matrix is $kn \times kn$, for period $k$ on the diagonal.
Let $n, r$ be such that $N = kn+r$, where $0 < r \leq k$. In other words, $a_r$ is the last entry on the main diagonal, so the period does not necessarily complete.

For $a_j$, the right eigenvector is given by
\[ v_j =
\begin{pmatrix}
\frac{b_1 \cdots b_{j-1}}{f(a_j-a_1) \cdots f(a_j-a_{j-1})}, &
\frac{b_2 \cdots b_{j-1}}{f(a_j-a_2) \cdots f(a_j-a_{j-1})}, & 
\cdots, &
\frac{b_{j-1}}{f(a_j-a_{j-1})}, &
1, &
0, &
\cdots, &
0
\end{pmatrix}^T.
\]

We split up the formula for the left eigenvectors into two cases:
\begin{enumerate}
\item
 $1 \leq j \leq r$
 \item
 $r < j \leq k$.
 \end{enumerate}

(1)\qquad \emph{ $1 \leq j \leq r$}
\newline
On the main diagonal, there are $n$ \emph{complete} blocks with entries $a_1, \ldots, a_k$, and one \emph{partial} block at the end with entries $a_1, \ldots a_r$.
In the first case, when $1 \leq j \leq r$, then $a_j$ is in this last partial block.
In this case then, let $p=kn+j$.

We have that
\[
u_j = \mu_j
\begin{pmatrix}
0 \\
\vdots \\
0 \\
(b_j\cdots b_{p-1})\cdot f(a_{j+1}-a_j) \cdots f(a_r-a_j) \\[0.5em]
(b_j\cdots b_{p} )\cdot f(a_{j+2}-a_j) \cdots f(a_r-a_j) \\[0.5em]
\vdots \\[0.5em]
(b_j\cdots  b_{N-2}) \cdot f(a_r-a_j) \\
b_j\cdots  b_{N-1}
\end{pmatrix}^T,
\]

where \[\mu_j = \frac{f(a_1-a_j)\cdots f(a_{j-1}-a_j)}{[f(a_1-a_j) \cdots f(a_k-a_j)]^n f(a_1-a_j) \cdots f(a_r-a_j)}.\]
\newline
\newline
(2) \qquad \emph{$r < j \leq k$}

In this case, $a_j$ is in the last \emph{complete} block. Now, let $p=k(n-1)+j$.

We have that
\[
u_j = \mu_j
\begin{pmatrix}
0 \\
\vdots \\
0 \\
(b_j \cdots b_{p-1})\cdot f(a_1-a_j) \cdots f(a_r-a_j) f(a_{j+1}-a_j) \cdots f(a_k-a_j) \\[0.5em]
(b_j \cdots b_{p}) \cdot f(a_1-a_j) \cdots f(a_r-a_j) f(a_{j+2}-a_j) \cdots f(a_k-a_j) \\[0.5em]
\vdots \\
(b_j \cdots b_{p+k-j-1})\cdot f(a_1-a_j) \cdots f(a_r-a_j)  \\[0.5em]
(b_j \cdots b_{p+k-j}) \cdot  f(a_2-a_j) \cdots f(a_r-a_j) \\[0.5em]
\vdots \\
(b_j \cdots b_{N-2})\cdot f(a_r-a_j) \\[0.5em]
b_j \cdots b_{N-1}
\end{pmatrix},
\]
again where \[\mu_j = \frac{f(a_1-a_j)\cdots f(a_{j-1}-a_j)}{[f(a_1-a_j)\cdots  f(a_k-a_j)]^n f(a_1-a_j) \cdots f(a_r-a_j)}. \]

\textbf{Case 3: General Case. }

\begin{itemize}
\item
The size of $A$ is $N \times N$.
\item
The $a_i$'s are not distinct for $1\leq i \leq k$
\end{itemize}

Let $A$ be a $N \times N$ periodic bidiagonal matrix with period $k$ on the main diagonal. Let  $n, r$ be such that $N=kn+r$, where $0 < r \leq k$. Write $a_1, \ldots, a_k$ for the entries on the main diagonal ($a_i$'s not distinct) and $b_1, \ldots, b_{N-1}$ for the entries on the superdiagonal. Let $a_r$ be the last entry on the main diagonal.

We can explicitly find the left and right eigenvectors for any eigenvalue, $\alpha$. Suppose $\alpha$ first appears in position $\ell$ of the period $k$. Then the corresponding right eigenvector for $\alpha$ is the same form as $v_\ell$ in case 2. That is,
\[ v_\ell =
\begin{pmatrix}
\frac{b_1 \cdots b_{\ell-1}}{f(a_\ell-a_1) \cdots f(a_\ell-a_{\ell-1})} , &
\frac{b_2 \cdots b_{\ell-1}}{f(a_\ell-a_2) \cdots f(a_\ell-a_{\ell-1})} , &
\cdots , & 
\frac{b_{\ell-1}}{f(a_\ell-a_{\ell-1})} , &
1 , &
0 ,&
\cdots ,&
0
\end{pmatrix}^T.
\]
The corresponding left eigenvector for $\alpha$ depends on the first and last positions of $\alpha$. Let $k(n-1)=\ell q+s$ and set $q \equiv m\pmod{k}$. We split up the formula for the left eigenvector of $\alpha$ into two cases, which again mirror the formulas given in case 2:
\begin{enumerate}
\item
 $1 \leq \ell \leq r$
 \item
 $r < \ell \leq k$.
 \end{enumerate}
For both of these two cases, we define
\[g(b_{i})=
\begin{cases}
b_{i}, &\mbox i \geq p\\
1, &\mbox i < p
\end{cases}.
\]
(1)\qquad \emph{ $1 \leq \ell \leq r$}
\newline
In this case then, $\alpha$ appears in the partial block. Let $p=kn+\ell$. We have that
\[
u_\ell = \mu_\ell
\begin{pmatrix}
0 \\
\vdots \\
0 \\
g(b_{p+m-\ell-1})f(a_{m+1}-a_\ell) \cdots f(a_r-a_\ell) \\[0.5em]
g(b_{p+m-\ell-1})g(b_{p+m-l}) f(a_{m+2}-a_\ell) \cdots f(a_r-a_\ell) \\[0.5em]
\vdots \\[0.5em]
g(b_{p+m-\ell-1}) \cdots g(b_{N-2})  f(a_r-a_\ell) \\
g(b_{p+m-\ell-1})\cdots g(b_{N-1})
\end{pmatrix},
\]

where $\mu_\ell = \frac{b_\ell \cdots b_{p-1} f(a_1-a_\ell)\cdots f(a_{\ell-1}-a_\ell)}{[f(a_1-a_\ell)\cdots f(a_k-a_\ell)]^n f(a_1-a_\ell) \cdots f(a_r-a_\ell)}$.

(2) \qquad \emph{$r < \ell \leq k$}
\newline
In this case, $\alpha$ is in the last \emph{complete} block. Here, we let $p=k(n-1)+\ell$. Now, we have
\[
u_\ell = \mu_\ell
\begin{pmatrix}
0 \\
\vdots \\
0 \\
g(b_{p+m-\ell-1})f(a_1-a_\ell) \cdots (a_r-a_\ell) f(a_{m+1}-a_\ell) \cdots f(a_k-a_\ell) \\[0.5em]
g(b_{p+m-\ell-1}) g(b_{p+m-l} )f(a_1-a_\ell) \cdots f(a_r-a_\ell) f(a_{m+2}-a_\ell) \cdots f(a_k-a_\ell) \\[0.5em]
\vdots \\
g(b_{p+m-\ell-1})\cdots g(b_{p+k-\ell-1}) f(a_1-a_\ell) \cdots f(a_r-a_\ell)  \\[0.5em]
g(b_{p+m-\ell-1})\cdots g(b_{p+k-\ell}) f(a_2-a_\ell) \cdots f(a_r-a_\ell) \\[0.5em]
\vdots \\
g(b_{p+m-\ell-1}) \cdots g(b_{N-2}) f(a_r-a_\ell) \\[0.5em]
g(b_{p+m-\ell-1})\cdots g(b_{N-1})
\end{pmatrix},
\]
where \[\mu_\ell = \frac{\left(b_\ell \cdots b_{p-1}\right)\cdot f(a_1-a_\ell)\cdots f(a_{\ell-1}-a_\ell)}{[f(a_1-a_\ell)\cdots f(a_k-a_\ell)]^n f(a_1-a_\ell) \cdots f(a_r-a_\ell)}. \]

From these formulas, we can find the eigenvectors, and hence the asymptotic behavior of the $\epsilon$-pseudospectrum for any bidiagonal matrix,
\[ \sigma_\epsilon(A) \approx \bigcup_{j=1}^{k} B\left(a_j, (C_j\epsilon)^{\frac{1}{n_j}}\right) \]
where $C_j=\|v_j\|\|u_j\|$ and $n_j$ is the size of the Jordan block corresponding to $a_j$.

\textbf{Note:} Let $A$ be a periodic, bidiagonal matrix and suppose $b_i=0$ for some $i$. Then the matrix decouples into the direct sum of smaller matrices, call them $A_1, \ldots , A_n$. To find the $\epsilon$-pseudospectrum of $A$, apply the same analysis to these smaller matrices, and from Theorem \ref{directsum}, we have that
\[ \sigma_\epsilon(A) = \displaystyle \bigcup_{i=1}^n \sigma_\epsilon(A_i).\]

\section{Finite Rank operators} \label{sec: finiterank}

The majority of this paper has focused on both explicit and asymptotic characterizations of $\epsilon$-pseudospectra for various classes of finite dimensional linear operators. A natural next step is to consider finite rank operators on an infinite dimensional space.

In section~\ref{sec: pseudospectra} we defined $\epsilon$-pseudospectra for matrices, although our definitions are exactly the same in the infinite dimensional case. For our purposes, the only noteworthy difference between matrices and operators is that the spectrum of an operator is no longer defined as the collection of eigenvalues, but rather
\[ \sigma(A) = \{ \lambda \mid \lambda I - A \text{ does not have a bounded inverse} \} \]
As a result, we do not get the same properties for pseudospectra as we did previously; in particular, $\sigma_\epsilon(A)$ is not necessarily bounded.

That being said, the following theorem shows that finite rank operators behave similarly to matrices, in that asymptotically the radii of $\epsilon$-pseudospectra are bounded  by powers of epsilon. The following theorem makes this precise. 

\begin{thm} Let $V$ be a Hilbert space and $A: V \to V$ a finite rank operator on $H$. Then there exists $C$ such that for sufficiently small $\epsilon$, 
\[\sigma_\epsilon(A) \subseteq \sigma(A) + B(0, C \epsilon^{\frac{1}{m+1}}).\] 
where $m$ is the rank of $A$.
Furthermore, this bound is sharp in the sense that there exists a rank-$m$ operator $A$ and a constant $c$ such that
\[ \sigma_\epsilon(A) \supseteq \sigma(A) + B(0, c \epsilon^{\frac{1}{m+1}}) \]
for sufficiently small $\epsilon$.
\label{finiterank}
\end{thm}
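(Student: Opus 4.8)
The plan is to reduce everything to a finite-dimensional determinant computation. Write $A$ in terms of its range: since $A$ has rank $m$, choose an orthonormal basis $\{e_1,\dots,e_m\}$ for $\overline{\operatorname{im} A}$ and write $Av = \sum_{i=1}^m \langle v, w_i\rangle e_i$ for suitable vectors $w_1,\dots,w_m \in V$; equivalently $A = BC$ where $C: V \to \mathbb{C}^m$ and $B: \mathbb{C}^m \to V$ are bounded. For $z \neq 0$, the operator $z - A$ is invertible on $V$ if and only if a certain $m\times m$ matrix is invertible: by the Sherman–Morrison–Woodbury identity, $(z - BC)^{-1} = z^{-1} + z^{-2} B (I_m - z^{-1} CB)^{-1} C$ whenever $I_m - z^{-1}CB$ is invertible, and more precisely $z \in \sigma(A) \setminus \{0\}$ iff $\det(z I_m - CB) = 0$. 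Thus the nonzero part of the spectrum is governed by the $m \times m$ matrix $M := CB$, whose characteristic polynomial $p(z) = \det(zI_m - M)$ is a degree-$m$ polynomial; note $z=0$ is always in $\sigma(A)$ when $\dim V = \infty$ since $A$ is not surjective.

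Next I would estimate the resolvent norm. Using the Woodbury formula, for $z$ away from $\sigma(A)$ we have
\[
\|(z-A)^{-1}\| \le \frac{1}{|z|} + \frac{1}{|z|^2}\,\|B\|\,\|C\|\,\big\|(I_m - z^{-1} M)^{-1}\big\|,
\]
and $\|(I_m - z^{-1}M)^{-1}\| = \|(zI_m - M)^{-1}\| \cdot |z|$ can be bounded in terms of $1/|p(z)|$ and the entries of $\operatorname{adj}(zI_m - M)$, which are polynomials in $z$ of degree $\le m-1$. Concretely, on a bounded neighborhood of $\sigma(A)$ there is a constant $C_1$ with $\|(z-A)^{-1}\| \le C_1 / |p(z)|$. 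Now if $z \in \sigma_\epsilon(A)$ and $z$ is near a nonzero eigenvalue $\lambda$ of multiplicity (as root of $p$) at most $m$, then $1/\epsilon < \|(z-A)^{-1}\| \le C_1/|p(z)|$ forces $|p(z)| < C_1 \epsilon$, and since $p$ vanishes to order $\le m$ at $\lambda$ this gives $|z - \lambda| \le C_2\,\epsilon^{1/m}$; near $z = 0$, the eigenvalue $0$ can have multiplicity up to $m$ as well, but the extra factor $1/|z|^2$ (rather than a polynomial) in the Woodbury bound is exactly what degrades the exponent to $\epsilon^{1/(m+1)}$ — tracking this carefully is where the $m+1$ comes from. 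I would handle the $z=0$ component separately: there $\|(z-A)^{-1}\|$ behaves like $|z|^{-1}$ times $\|(zI_m - M)^{-1}\|$, and if $0$ is a root of $p$ of order $j \le m$ then $\|(z-A)^{-1}\| \lesssim |z|^{-1-j}$, so $1/\epsilon < C|z|^{-1-j}$ yields $|z| \le (C\epsilon)^{1/(1+j)} \le (C\epsilon)^{1/(m+1)}$. Combining over all components and absorbing constants gives the claimed inclusion $\sigma_\epsilon(A) \subseteq \sigma(A) + B(0, C\epsilon^{1/(m+1)})$.

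For sharpness I would exhibit a concrete rank-$m$ operator on $\ell^2(\mathbb{N})$ (or on $\mathbb{C}^{m+1}$ embedded in $V$, then padded by zero) realizing the exponent: take the weighted shift supported on the first $m+1$ coordinates sending $e_1 \mapsto 0$, $e_{i+1}\mapsto e_i$ for $1 \le i \le m$ — that is, the $(m+1)\times(m+1)$ nilpotent Jordan block $J$ with eigenvalue $0$, extended by $0$ on the rest of $\ell^2$. This $A$ has rank $m$, $\sigma(A) = \{0\}$, and by Proposition \ref{jordanblock} (applied to the $(m+1)\times(m+1)$ Jordan block) together with Theorem \ref{directsum} — the $\epsilon$-pseudospectrum of $J \oplus 0 \oplus 0 \oplus \cdots$ is $\sigma_\epsilon(J) \cup \sigma_\epsilon(0)$, and $\sigma_\epsilon(0)$ is just $B(0,\epsilon)$ — we get $\sigma_\epsilon(A) \supseteq B(0, \sqrt[m+1]{\epsilon(1+\epsilon)^m}) \supseteq B(0, c\,\epsilon^{1/(m+1)})$ for small $\epsilon$ with any $c < 1$. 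This establishes the lower bound with $\sigma(A) = \{0\}$.

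The main obstacle I anticipate is the bookkeeping in the upper bound near $z = 0$: one must carefully separate the contribution of the genuinely infinite-dimensional kernel (the $|z|^{-1}$ factor, always present) from the finite-dimensional Jordan structure at $0$ inside $M = CB$, and check that their orders of vanishing add rather than max out, so that a Jordan block of size $m$ at $0$ in $M$ combined with the ambient non-surjectivity produces exponent $1/(m+1)$ and not $1/m$. A clean way to do this is to first prove the bound $\|(z-A)^{-1}\| \le C/(|z|\cdot|p(z)/z^{j}|)$ type estimates by factoring $p(z) = z^j q(z)$ with $q(0)\neq 0$, and then observe that the worst case among all eigenvalues (including $0$) is governed by $\deg p + 1 = m+1$ at most, uniformly. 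The resolvent-norm estimate via Woodbury and the adjugate, and the elementary fact that a polynomial of degree $d$ vanishing to order $k$ at $\lambda$ satisfies $|p(z)| \ge c|z-\lambda|^k$ locally away from its other roots, are the only analytic inputs needed; everything else is the linear algebra of the Schur complement.
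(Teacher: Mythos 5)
Your proposal is correct but takes a genuinely different route from the paper's. The paper decomposes $A = A' \oplus 0$ with respect to an orthogonal splitting $V = U \oplus W$ ($U$ finite-dimensional, $A(U)\subseteq U$, $A(W) = 0$), applies Theorem~\ref{directsum} and the normality of the zero operator to reduce to the matrix $A'$, and then invokes Theorem~\ref{thm: AUDiT}; the exponent $m+1$ drops out of the linear-algebra fact that a size-$n$ Jordan block has rank $n$ if $\lambda\neq 0$ and rank $n-1$ if $\lambda = 0$, so a rank-$m$ matrix can carry a Jordan block at $0$ of size at most $m+1$. You instead factor $A = BC$, use the Woodbury identity $(z-A)^{-1} = z^{-1}I + z^{-1}B(zI_m - CB)^{-1}C$, and bound the resolvent through the characteristic polynomial $p$ of the $m\times m$ matrix $M = CB$; the $+1$ in the exponent then comes from the explicit $|z|^{-1}$ prefactor rather than from Jordan-block bookkeeping. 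Both proofs isolate $z=0$ and trace the $m+1$ to the interaction between the infinite-dimensional kernel and the finite rank. Your route is more self-contained (no Lidskii, no AUDiT machinery) and produces concrete resolvent estimates; the paper's is shorter, reuses a theorem already proved, and yields the sharper local exponent (largest Jordan block size rather than algebraic multiplicity) together with the explicit constant $C = \|XY\|$, though that refinement is not needed for the stated bound. One small imprecision on your side: the intermediate inequality $\|(z-A)^{-1}\| \le C_1/|p(z)|$ cannot hold on a full neighborhood of $\sigma(A)$ when $p(0)\neq 0$ yet $0\in\sigma(A)$ (the left side blows up as $z\to 0$ while the right stays bounded); the correct uniform bound is $\|(z-A)^{-1}\| \le C_1/(|z|\,|p(z)|)$, which is what you effectively use once you treat $z\approx 0$ separately. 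Your sharpness example (the size-$(m+1)$ nilpotent Jordan block padded by $0$ on $\ell^2$, combined with Proposition~\ref{jordanblock} and Theorem~\ref{directsum}) is essentially the paper's $A_m$ and is fine.
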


\begin{proof} Since $A$ has finite rank, there exists a finite dimensional subspace $U$ such that $V=U \oplus W$ and $A(U) \subseteq U$, and $A(W)= \{0\}$. Choosing an orthonormal basis for $A$ which respects this decomposition we can write $A=A' \oplus 0$. Then the spectrum of $A$ is $\sigma(A') \cup \{0\}$, and we know that for any $\epsilon$,
\[\sigma_\epsilon(A) = \sigma_\epsilon(A') \cup  \sigma_\epsilon (0). \] 

The $\epsilon$-pseudospectrum of the zero operator is well-understood since this operator is normal; for any $\epsilon$, it is precisely the ball of radius $\epsilon$. It thus suffices to consider the $\epsilon$-pseudospectrum of the finite rank operator $A':U \to U$, where $U$ is finite dimensional. The  $\epsilon$-pseudospectrum of this operator goes like $\epsilon^{1/j}$, where $j$ is the dimension of the largest Jordan block; we will prove that $j \leq m+1$. Note that the rank of the  $n \times n$ Jordan block given by 
\[A= \begin{pmatrix}
 \lambda &1 & 0 & 0 & \ldots \\
 0 & \lambda & 1 & 0 & \ldots \\
\vdots & \vdots & \ddots & \ddots & \ldots \\
\vdots & \vdots &\vdots & \ddots & b_{n-1} \\
0 & 0 & 0 & 0  & \lambda \\
 \end{pmatrix} \]
is $n$ if $\lambda \neq 0$, and $n-1$ if $\lambda = 0$. Since we know that the rank of $A$ is larger than or equal to the rank of the largest Jordan block, we have  an upper bound on the dimension of the largest Jordan block: it is of size $m+1$, with equality attained when $\lambda = 0$. By Thm. \ref{thm: AUDiT}, we then know that $\sigma_\epsilon(A)$  is contained, for small enough $\epsilon$, in the set $ \sigma(A) + C \epsilon^{\frac{1}{m+1}}$. 

Note that this bound is sharp; we can see this by taking $V$ to be $\R^{m+1}$ and considering the rank-m operator \[A_m= \begin{pmatrix}
 0 &1 & 0 & 0 & \ldots \\
 0 & 0 & 1 & 0 & \ldots \\
\vdots & \vdots & \ddots & \ddots & \ldots \\
\vdots & \vdots &\vdots & \ddots & 1 \\
0 & 0 & 0 & 0  & 0 \\
 \end{pmatrix} \] the pseudospectrum of which will contain the ball of radius $\epsilon^{\frac{1}{m+1}}$ by proposition \ref{jordanblock}. \end{proof}
 
\begin{openqs}
\hspace{1cm} 

The natural question to ask now is whether we can extend this result to more arbitrary operators on Hilbert spaces. In particular, for a bounded operator $A$, we would like to establish if there exists a continuous function $r_A(\epsilon)$ such that for sufficiently small $\epsilon$, 
\[ \sigma_\epsilon(A) \subseteq \sigma(A) + B(0, r_A(\epsilon)). \]
For a matrix $A$, we proved in Thm. \ref{thm: AUDiT} that $r_A(\epsilon) = C\epsilon^{1/n}$, where $n$ is the size of the largest Jordan block associated to $A$, and $C$ is a constant that depends on the left and right eigenvectors associated to a certain eigenvalue. For a finite rank operator $A$, we proved in Thm. \ref{finiterank} that $r_A(\epsilon) = C\epsilon^{\frac{1}{m+1}}$, where $m$ is the rank of the operator and $C$ is as above.

For closed but not necessarily bounded operators, the picture is more complex, as the spectrum need not be bounded or even non-empty.
For example, the operator $A : u \mapsto u'$ in $L^2[0,1]$ with domain $D(A)$ being the set of absolutely continuous functions on $[0,1]$ satisfying $u(1) = 0$ has empty spectrum.
With $D(A)$ being the entire space, then the spectrum of $A$ is the entire complex plane. Davies \cite{davies1999pseudo} also provides an example of an unbounded operator with unbounded pseudospectrum.

Given these examples, we can see that Thm. \ref{finiterank} will not generalize to unbounded operators, as the pseudospectrum of an unbounded operator may be unbounded for all $\epsilon$. 

Nonetheless, we do still have a certain convergence of the $\epsilon$-pseudospectrum to the spectrum  \cite[\S 4]{trefethen2005spectra}, namely $ \cap_{\epsilon > 0} \; \sigma_\epsilon(A) = \sigma(A) $. Also, while the $\epsilon$-pseudospectrum may be unbounded, any bounded component of it necessarily contains a component of the spectrum. These results imply that the bounded components of the $\epsilon$-pseudospectrum must converge to the spectrum. Therefore, if we restrict our attention to these bounded components, we can attempt to generalize Thms. \ref{thm: AUDiT} and \ref{finiterank} by asking whether the \emph{bounded} components of $\sigma_\epsilon(A)$ converge to the spectrum as a union of disks.

\end{openqs}

\section*{Acknowledgements}
Support for this project was provided by the National Science Foundation REU Grant DMS-0850577 and DMS-1347804, the Clare Boothe Luce Program of the Henry Luce Foundation, and the SMALL REU at Williams College.

\bibliographystyle{plain}
\bibliography{PseudospectraReferences}

\end{document}